\tikzset{decorated arrows/.style={
    postaction={
        decorate,
        decoration={
            markings,
            mark=between positions 0 and 1 step 15mm with {\arrow[black]{stealth};}
            }
        },
    }
}
\tikzset{decorated arrows2/.style={
    postaction={
        decorate,
        decoration={
            markings,
            mark=at position 15mm with {\arrow[black]{stealth};}
            }
        },
    }
}
\def\set@curr@file#1{%
  \begingroup
    \escapechar\m@ne
    \xdef\@curr@file{\expandafter\string\csname #1\endcsname}%
  \endgroup
}
\def\quote@name#1{"\quote@@name#1\@gobble""}
\def\quote@@name#1"{#1\quote@@name}
\def\unquote@name#1{\quote@@name#1\@gobble"}
\newtheorem{lemma}{Lemma}
\newtheorem{maintheorem}{Theorem}
\newtheorem{defn}{Definition}[]
\newtheorem{rem}{Remark}
\theoremstyle{plain}
\newcommand{\NN}{{\mathbb{N}}}
\newcommand{\RR}{\mathbb{R}}
\newcommand{\dpt}{\displaystyle}
\newcommand{\RN}[1]{%
  \textup{\uppercase\expandafter{\romannumeral#1}}%
}
\author[J. P. S. M. de Carvalho and A. A. Rodrigues]{Jo\~ao P. S. Maur\'icio de Carvalho$^{*\MakeLowercase{a},1,2}$ and Alexandre A. Rodrigues$^{\MakeLowercase{b},1,2,3}$ \\
\\
$^\MakeLowercase{a}$\MakeLowercase{jocarvalho@fc.up.pt} \\ \medskip {\bf ORCID:} 0000-0001-7709-1631 \\
$^\MakeLowercase{b}$\MakeLowercase{alexandre.rodrigues@fc.up.pt} \\ {\bf ORCID:} 0000-0001-8182-9889 \\
\\ 
$^{1}$Faculty of Sciences, University of Porto, \\ \medskip Rua do Campo Alegre s/n, Porto 4169-007, Portugal \\ 
$^{2}$Centre for Mathematics, University of Porto, \\ \medskip Rua do Campo Alegre s/n, Porto 4169-007, Portugal \\
$^3$Lisbon School of Economics \& Management, \\Rua do Quelhas 6, \\ Lisboa 1200-781, Portugal 
}
\begin{document}

\subjclass[2010]{37G10, 37G15, 34C23, 37D05, 92B05}
\keywords{Double-zero singularity; Unfoldings, Bifurcations, SIR model, Vaccination} 
\thanks{JPSMC was supported by CMUP, Portugal (UIDP/MAT/00144/2020), which is funded by Funda\c{c}\~ao para a Ci\^encia e a Tecnologia (FCT). AR was partially supported by CMUP, Portugal (UIBD/MAT/00144/2020), which is funded by FCT with national and European structural funds through the programs FEDER, under the partnership agreement PT2020. \\ $^*$Corresponding author.}

\title[SIR model with vaccination: bifurcation analysis]
{SIR model with vaccination: \\bifurcation analysis}

\date{\today}  

\begin{abstract}
There are few adapted SIR models in the literature that combine vaccination and logistic growth.
In this article, we study bifurcations  of a SIR model where the class of \emph{Susceptible} individuals grows logistically and has been subject to constant vaccination. We explicitly prove that the endemic equilibrium is a \emph{codimension two singularity} in the parameter space $(\mathcal{R}_0, p)$, where $\mathcal{R}_0$ is the \emph{basic reproduction number} and $p$ is the proportion of \emph{Susceptible} individuals successfully vaccinated at birth. 

We exhibit explicitly the \emph{Hopf}, \emph{transcritical}, \emph{Belyakov}, \emph{heteroclinic} and \emph{saddle-node bifurcation} curves unfolding the \emph{singularity}. 
The two parameters  $(\mathcal{R}_0, p)$ are written in a useful way to evaluate the proportion of vaccinated individuals necessary to eliminate the disease and to conclude how  the vaccination may affect the outcome of the epidemic. We also exhibit  the region in the parameter space where the disease 
persists and we illustrate our main result with numerical simulations, emphasizing the role of the parameters. 
\end{abstract}

\maketitle



\section{Introduction}

Mathematical models applied to epidemiology play an important role to understand the dynamics of infectious diseases. Understanding how a  disease evolves in a population can be  beneficial, not only to predict epidemic outbreaks, but also to improve approaches against its spread~\cite{Brauer2019,Hethcote2000,Bonyah2020,Rajagopal2020,Cobey2020}. One of the most widely used models to study population interactions is the SIR model based on the division of the population in three classes of individuals: \emph{Susceptible} (S), \emph{Infectious} (I) and \emph{Recovered} (R)~\cite{Kermack1932,Dietz1976}.

When a disease enters a population, the medical community aims (as quick as possible) to find ways to stop its evolution. 
There are studies in the literature that investigate the behaviour of populations in the presence of infectious diseases by using a wide range of techniques~\cite{MilnerPugliese1999,CarvalhoRodrigues2022,Barrientos2017,CarvalhoPinto2021,Onofrio2022}. 
One of the most effective ways to prevent the disease's progression is via a \emph{vaccination} policy~\cite{Plotkin2005,Plotkin2011, Makinde2007,SahaGhosh2022,Ghosh2019}. As far as we know, there are few articles in the literature that combine logistic growth in the \emph{Susceptible} population and the effect of vaccination.
\medbreak
In mathematical modelling, one may identify three vaccination strategies:
\begin{enumerate}
\item[(i)] \emph{Constant vaccination}~\cite{Makinde2007,Shulgin1998,Elazzouzi2019} -- it consists of vaccinating a prescribed ratio of newborns;
\item[(ii)] \emph{Pulse vaccination}~\cite{Shulgin1998,Stone2000} -- it implies vaccinating a percentage of the susceptible population periodically;
\item[(iii)] \emph{Mixed vaccination}~\cite{Shulgin1998} -- combination of constant and pulse vaccination.\\
\end{enumerate}

Finding the most appropriate strategy is a challenge because we need to combine the effect of several factors, namely the efficiency of the vaccination and its cost to the public health policies. 
The application of \emph{bifurcation analysis} to epidemiology  may  give clues about the evolution of a given disease in the presence of several factors namely vaccination, seasonality, sub-optimal immunity and nonlinear incidence~\cite{Algaba2022,Rodrigues2021,Yagasaki2002,Duarte2019}.  In the present article, we are interested in a modified SIR model exhibiting a \emph{Double-zero (DZ) singularity}  and the dynamics it may unfold~\cite{Kuznetsov2004}. Sensitivity analysis may be particularly useful in this context.

\subsection*{State of art}
There are several studies involving \emph{singularities} in epidemic models\footnote{There is an abundance of references in the literature. We choose to mention only a few for clarity and the choice is uniquely based on our  preferences. The reader interested in more details and examples may use the references within those we mention.}.

In Jin \emph{et al.}~\cite{Jin2007}, the authors studied global dynamics of a SIRS model with nonlinear incidence rate. They established a threshold for a disease to be extinct or endemic, analyzed the existence and  stability of equilibria and verified the existence of bistable states. Using normal forms and the Dulac criterion, they investigated  backward bifurcation and obtained all curves associated to the {\it Bogdanov-Takens bifurcation}.

Zhang and Qiao~\cite{Zhang2023}  analyzed bifurcations in a SIR model including  bilinear incidence rate,  vaccination and hospital resources (number of available beds).
They exhibited conditions to ensure the existence of a codimension three singularity for the system. 
They concluded that a high value of the vaccination parameter allows the disappearance of the disease in the population.  See also~\cite{ShanZhu2014} where the authors considered a SIR model with a standard incidence rate and a nonlinear recovery rate, formulated to consider the impact of available resources of the public health system (essentially the number of hospital beds). 

Alexander and Moghadas~\cite{AlexanderMoghadas2005} studied a SIRS epidemic model with a generalized nonlinear incidence as a function of the number of infected individuals. It is assumed that the natural immunity acquired by the infection is not permanent but wanes with time.  
 Normal forms have been  derived for the different types of bifurcation that the model undergoes.  The {\it Bogdanov-Takens} normal form has been used to formulate the local bifurcation curves for a family of {\it homoclinic orbits} arising when a {\it Hopf} and a {\it saddle-node bifurcation} merge. They provided conditions for the occurrence of {\it Hopf bifurcations} in terms of two  parameters: the {\it basic reproductive number} and the rate of loss of immunity acquired by the infection.

In 2022, Pan \emph{et al.}~\cite{Pan2022} proposed a SIRS model undergoing a degenerate codimension three bifurcation inducing intermittency into the dynamics. The authors provided sufficient conditions to guarantee the global asymptotic stability of the unique endemic equilibrium. Vaccination and its boosted version have not been taken into account.

In 2018, Li and Teng~\cite{LiTeng2018} presented a SIRS model with generalized non-monotone incidence rate and qualitatively proved the existence of \emph{Bogdanov-Takens bifurcations}.  They  located regions where the disease either persists or disappears. This phenomenon indicates that the initial conditions of an epidemic may determine the final states of an epidemic to go extinct or not. We also address the reader to~\cite{Misra2022,Lu2019} for other variations of the SIR/SIRS models. The reference~\cite{Lu2019} includes numerical simulations and data-fitting of the influenza data in China.

\subsection*{Novelty} Our work contributes to the mathematical understanding of the modified SIR model, where the class of \emph{Susceptible} individuals (subject to a constant vaccination) grows logistically.
We find a DZ singularity in the bifurcation space $(\mathcal{R}_0, p)$, where $\mathcal{R}_0$ is the \emph{basic reproduction number} and $p$ is the proportion of \emph{Susceptible} individuals successfully vaccinated at birth. Writing the parameters (of the model) as function of $\mathcal{R}_0$ and $p$ is our first breakthrough. 

We exhibit explicit expressions for  the \emph{saddle-node}, \emph{transcritical}, \emph{Hopf} and \emph{heteroclinic bifurcation} curves associated to the DZ bifurcation. 
These unfolding curves have the same qualitative properties to the truncated amplitude system associated to the \emph{Hopf-zero} normal form (8.81) of~\cite{Kuznetsov2004}. The \emph{heteroclinic cycle} is associated to two disease-free equilibria and is asymptotically unstable (repelling). 

For the sake of completeness,  in Section \ref{s:prel}, we describe some terminology that are going to be useful throughout this article.  

\section{Preliminaries}
\label{s:prel}
In this section, we introduce some terminology for vector fields acting on a $\RR^n$, $n\in \NN$, that will  be used in the remaining sections. Let $f$ be a smooth vector field on $\RR^n$ with flow given by the unique solution $x(t)=\varphi(t, x)\in \RR^n$ of the two-parameter family
\begin{equation}\label{general}
\dot{x}=f_{(\eta_1, \eta_2)} (x), \qquad x(0)=x_0,
\end{equation}
where $(\eta_1, \eta_2)\in \RR^2$.  \\

\begin{defn}
We say that $\mathcal{K} \subset (\RR^+_0)^2$ is a \emph{positively flow-invariant set} for \eqref{general} if for all $x \in \mathcal{K}$ the trajectory of $\varphi(t, x)$ is  contained in $\mathcal{K}$ for $t \geq 0$.\\
 \end{defn}
\begin{defn}
 Given two hyperbolic equilibria $A$ and $B$ of \eqref{general}, a \emph{heteroclinic connection} from $A$ to $B$, is a solution of \eqref{general} 
contained in $W^u(A)\cap W^s(B)$, the intersection of the unstable manifold of $A$ and the stable manifold of $B$.\\
 \end{defn}

For a solution of (\ref{general}) passing through $x\in \RR^n$, the set of its accumulation points, as $t$ goes to $+\infty$, is the \emph{$\omega$-limit set} of $x$. More formally, if $\overline{A}$ is the topological closure of $A\subset \RR^n$, then: 
 \begin{defn} If $x\in \RR^n$, the $\omega$-limit of $x$ is:
$$
\omega(x)=\bigcap_{T=0}^{+\infty} \overline{\left(\bigcup_{t>T}\varphi(t, x)\right)}.
$$ 
 \end{defn}
It is well known that $\omega(x)$  is closed and flow-invariant, and if the $\varphi$--trajectory of $x$ is contained in a compact set, then 
$\omega(x)$ is non-empty. If $E$ is an invariant set of \eqref{general}, we say that $E$ is a \emph{global attractor} if $\omega(x) \subset E$, for Lebesgue almost all points $x$ in   $\mathbb{R}^n$.

\medskip

The \emph{center manifold} of a non-hyperbolic equilibrium is the set of solutions whose behaviour around the equilibrium point is not controlled neither by the attraction of the stable manifold nor by the repulsion of the unstable manifold.  If the linearized part of $Df_{(\eta_1, \eta_2)}$ (at a given equilibrium) has an eigenvalue with zero  real part, the center manifold plays an important goal and it is the right set where bifurcations occur. 
 
 Throughout this article, we study the DZ \emph{singularity} of codimension two   for a family of differential equations  corresponding to the case $s=1$, $\theta<0$ in Equation (8.81) of~\cite{Kuznetsov2004}.  The unfolding of this \emph{singularity} involves lines of \emph{saddle-node}, \emph{Belyakov transition}, \emph{Hopf} and \emph{homo/heteroclinic bifurcations}. We suggest the reading of~\cite{Kuznetsov2004} for a complete understanding of these bifurcations as well as the sufficient conditions that prompt their existence.

\section{The model} \label{model_model}

Inspired by the classical SIR model, we are going to divide the individuals of a given (human) population into three classes of individuals~\cite{Dietz1976,LiTeng2017}:

\medskip
 
\begin{itemize}
\item \emph{Susceptible (S)}: proportion of healthy individuals who are susceptible to the disease;
\medskip
\item  \emph{Infectious (I)}: proportion of infected individuals who can transmit the disease to susceptible individuals; 
\medskip
\item \emph{Recovered (R)}: proportion of individuals who recovered naturally from the disease or through immunity conferred by the vaccine. This comprises individuals who have definitive immunity and can not transmit the disease. \\
\end{itemize}

We assume that \emph{Susceptible} individuals have never been in contact with the disease, but may become infected when they are in contact with the population of the \emph{Infectious}, and then become part of this class. Whereas in the class of \emph{Infectious}, these individuals can recover naturally and become part of the class of \emph{Recovered} ones. The \emph{Susceptible} individuals may also have been successfully vaccinated at birth, thus becoming immune to the disease~\cite{Shulgin1998,StoneShulgin2000,LuChiChen2002}. Inspired by~\cite{Dietz1976,Shulgin1998,ZhangChen1999}, the nonlinear system of ODE in variables $S$, $I$ and $R$ (depending on time $t$), is given by:

\begin{equation}
\label{modeloSIR}
\begin{array}{lcl}
\dot{X} = \mathcal{F} (X) \quad \Leftrightarrow \quad
\begin{cases}
&\dot{S} =  S(A-S) - \beta I S - pm \\
\\
&\dot{I} =  \beta IS - (\mu+d) I - gI \\
\\	
&\dot{R} =  p m + gI - \mu R,
\end{cases}
\end{array}
\end{equation}

\smallskip

\noindent where 

\smallskip

$$
\begin{array}{lcl}
X(t) &=& \left( S(t), I(t), R(t) \right) \in (\RR^+)^3, \\
\\
\dot{X} &=& (\dot{S}, \dot{I}, \dot{R}) \,\,\, = \,\,\, \dpt \left(\frac{\mathrm{d}S}{\mathrm{d}t},\frac{\mathrm{d}I}{\mathrm{d}t},\frac{\mathrm{d}R}{\mathrm{d}t}\right). \\
\end{array}
$$
\bigbreak

\begin{rem}
\label{extension}
When $S=0$, we may  extend non-smoothly $\mathcal{F}$ to the vector field $$(0, - (\mu+d) I - gI, pm + gI - \mu R).$$ 
\end{rem}

\medskip

The parameters of \eqref{modeloSIR} may be interpreted as follows:

\medskip

\begin{description}
\item[$A$] carrying capacity of susceptible individuals when $\beta=0$ (in the absence of disease) and $p = 0$ (in the absence of vaccination);
\smallskip
\item[$\beta$] transmission rate of the disease;
\smallskip
\item[$m$] birth rate; 
\smallskip
\item[$p$] proportion of susceptible individuals successfully vaccinated at birth, for $p \in [0,1]$;
\smallskip
\item[$\mu$] natural death rate of infected and recovered individuals;
\smallskip
\item[$d$] death rate of infected individuals due to the disease;
\smallskip
\item[$g$]  natural recovery rate. \\
\end{description}

Figure \ref{boxes} illustrates the interaction between the classes of \emph{Susceptible}, \emph{Infectious} and \emph{Recovered} individuals in model \eqref{modeloSIR}.
The \emph{basic reproduction number}, denoted by $\mathcal{R}_0$, may be seen as the number of secondary infections caused by a single infected person in a susceptible population~\cite{Jones2007}. For model \eqref{modeloSIR} with $p=0$, $\mathcal{R}_0$ may be explicitly computed as~\cite{CarvalhoRodrigues2022,ParkBolker2020,Li2011}:
\begin{equation}
\label{R0}
\begin{array}{lcl}
\mathcal{R}_0 =\dpt \lim_{T\rightarrow +\infty} \frac{1}{T} \int_0^T \dfrac{A \beta}{\mu+d+ g} \, \mathrm{d}t = \dfrac{A \beta}{\mu+d+ g}  > 0.
\end{array}
\end{equation}

\subsection{Hypotheses and motivation}

With respect to system \eqref{modeloSIR}, we also assume the following conditions:
\smallskip

\begin{itemize}
\item[{\bf (C1)}] All parameters are positive;
\medskip
\item[{\bf (C2)}] For all $t \in \mathbb{R}^+_0, S(t) \leq A$;
\medskip
\item[{\bf (C3)}] Vaccination is considered only when $\mathcal{R}_0 > 1$, \emph{i.e.} $p>0$ if and only if $\mathcal{R}_0 > 1$ (in other words, for $\mathcal{R}_0 < 1$ no preventive measures involving vaccination will be considered). 
\end{itemize}

\medskip

\noindent The phase space associated to \eqref{modeloSIR} is a subset of $(\mathbb{R}_0^+)^3$ induced with the usual topology, and the set of parameters is: 
\begin{eqnarray*}
\label{parameters_0}
\Omega = \left\{ \omega = (A,\beta,m,\mu,d,g) \in (\mathbb{R}^+)^6 \right\} \quad \text{and} \quad p \in \mathbb{R}_0^+.
\end{eqnarray*}

\usetikzlibrary{arrows,positioning}
\begin{center}
\begin{figure}[ht!]
\begin{tikzpicture}
[
auto,
>=latex',
every node/.append style={align=center},
int/.style={draw, minimum size=1.75cm}
]

    \node [fill=lightgray,int] (S)             {\Huge $S$};
    \node [fill=lightgray,int, right=5cm] (I) {\Huge $I$};
    \node [fill=lightgray,int, right=11cm] (R) {\Huge $R$};
    
    \node [below=of S] (s) {} ;
    \node [below=of I] (i) {} ;
    \node [below=of R] (r) {};
    
    \node [above=of S] (os) {};
    \node [above=of I] (oi) {};
    \node [above=of R] (or) {};
    
    \coordinate[right=of I] (out);
    \path[->, auto=false,line width=0.35mm]
    			(S) edge node {$\beta I S$ \\[.6em]} (I)
                          (I) edge node {$gI$       \\[.6em] } (R) ;

    \path[->, auto=false,line width=0.35mm]
			(S) edge  [out=-120, in=-60] node[below] {$S(A-S)$ \\ [0.2em] \emph{Logistic growth}} (S)
    			(I) (5.88,-0.87cm) edge [] node[right]{$(\mu+d) I$} (5.88,-2cm) (i) 
    			(R) (11.88,-0.87cm) edge [] node[right]{$\mu R$} (11.88,-2cm) (r) ;

    \path[-, auto=false,line width=0.35mm]
    			(S) (0,0.87cm) edge [] node[right]{} (0,2cm) (os)
			(os) (0,2cm) edge [] node[above]{$p m$} (11.88,2cm) (or)
			(or) (11.88,2cm) edge [->] node[below]{} (11.88,0.87cm) (R) ;
                         
\end{tikzpicture}
\caption{\small Schematic diagram of model \eqref{modeloSIR}. Boxes represent compartments, and arrows indicate the flow between $S$, $I$ and $R$.}
\label{boxes}
\end{figure}
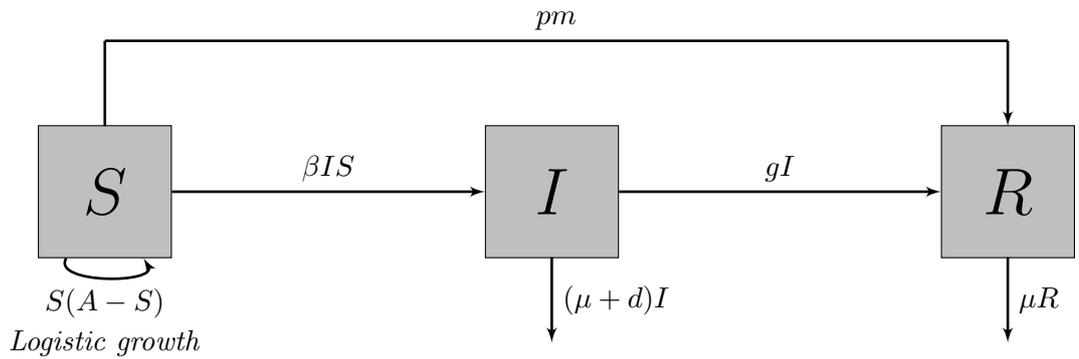
\end{center}

System \eqref{modeloSIR} has been inspired in the classical SIR model~\cite{Kermack1932} with slight modifications as we proceed to explain:

\smallskip

\begin{itemize}
\item We   consider logistic growth in the susceptible population due to the crowding and natural competition for resources~\cite{CarvalhoRodrigues2022,LiTeng2017,ZhangChen1999}, instead of linear or exponential growth;
\medskip
\item Instead of a mixed or pulse vaccination strategy~\cite[Subsection 2.1]{Shulgin1998}, we have assumed constant vaccination~\cite{Makinde2007,Shulgin1998,Stone2000}.
\end{itemize}

%

\subsection{Two-dimensional system} 

The first two equations of (\ref{modeloSIR}), $\dot{S}$ and $\dot{I}$, are independent of  ${R}$. Therefore,  we may consider the system:

\smallskip

\begin{equation}
\label{modelo2}
\begin{array}{lcl}
   
\begin{cases}
&\dot{S} =  S(A-S) - \beta I S - pm \\
\\
&\dot{I} =  \beta IS - \left( \mu+d \right)I - g I,
\end{cases}
\end{array}
\end{equation}

\medskip

\noindent with $x=(S,I)\in (\RR^+)^2$. Because of Remark \ref{extension},  we may extend  \eqref{modelo2} to the line $S=0$:
\begin{equation}
\label{modelo2 e S=0}
 \begin{array}{lcl}
 \begin{cases}
&\dot{S} =  0 \\
\\
&\dot{I} =    - \left( \mu+d \right)I - g I.
\end{cases}
\end{array}
\end{equation} 

The vector field associated to \eqref{modelo2} and \eqref{modelo2 e S=0} will be denoted by $f$ and its flow is $$\varphi(t, (S_0, I_0)), \quad t \in \mathbb{R}^+_0,\quad (S_0, I_0) \in (\mathbb{R}^+_0)^2.$$ This model  will be the object of study of the present article; in order to  shorten the notation,  we denote by $\sigma$ the sum $\mu + d$.

\begin{lemma} \label{positivity}

The region defined by:
$$
\mathcal{M} = \left\{ (S,I) \in  (\mathbb{R}_0^+)^2: \quad  0 \leq S \leq A, \quad 0 \leq S+I \leq \dfrac{A (\sigma + g + A)}{\sigma + g}, \quad S, I\geq 0 \right\},
$$
is positively flow-invariant for  \eqref{modelo2} and \eqref{modelo2 e S=0}.

\end{lemma}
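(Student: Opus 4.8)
The plan is to show that $\mathcal{M}$ is a \emph{trapping region}: I will verify that along the whole boundary $\partial\mathcal{M}$ the field $f$ is either tangent to $\partial\mathcal{M}$ or points strictly into the interior, which by the standard sub-tangentiality (Nagumo-type) criterion prevents any forward trajectory from leaving $\mathcal{M}$. Since $K:=A(\sigma+g+A)/(\sigma+g)>A$, the set $\mathcal{M}$ is the convex quadrilateral with vertices $(0,0)$, $(A,0)$, $(A,A^2/(\sigma+g))$ and $(0,K)$, so $\partial\mathcal{M}$ splits into four edges, and I would examine each one using its outward normal.

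On the two edges lying on the axes the field is tangent. On $\{I=0\}$ the second equation of \eqref{modelo2} factors as $\dot I=I(\beta S-(\sigma+g))=0$, so this edge is invariant; on $\{S=0\}$ the original field has $\dot S=-pm<0$ and would escape, which is exactly why Remark \ref{extension} is needed: for the extension \eqref{modelo2 e S=0} one has $\dot S=0$ (and $\dot I=-(\sigma+g)I\le 0$), so a trajectory reaching $S=0$ switches to the extended dynamics and slides down this edge toward the origin. On the edge $\{S=A\}$ the outward normal is $(1,0)$, and substituting $S=A$ gives
$$\dot S=A(A-A)-\beta I A-pm=-\beta I A-pm<0,$$
so here $f$ points strictly inward.

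The decisive edge is the slanted one, $\{S+I=K\}$, with outward normal $(1,1)$, where I must show $\frac{d}{dt}(S+I)\le 0$. Adding the two equations the coupling term $\beta IS$ cancels, leaving
$$\frac{d}{dt}(S+I)=S(A-S)-pm-(\sigma+g)I.$$
On this edge $0\le S\le A$ (the bound $S\le A$ binds because $K>A$), so $S(A-S)\le A^2$, while $I=K-S\ge K-A=A^2/(\sigma+g)$ gives $(\sigma+g)I\ge A^2$; substituting both estimates the two $A^2$-terms cancel and
$$\frac{d}{dt}(S+I)\le A^2-pm-A^2=-pm<0.$$
This cancellation is what pins down the value of $K$: it is chosen precisely so that the crude bound $S(A-S)\le A^2$ already forces the sum $S+I$ to strictly decrease on the top edge.

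The only genuine obstacle is this slanted edge, where the argument hinges on choosing the functional $S+I$ together with the sharp level $K$ so that the loss term $-(\sigma+g)I$ dominates the logistic term $S(A-S)$; the explicit expression for $K$ in the statement is essentially reverse-engineered from this inequality. The remaining point is conceptual: on the two axis edges the field is only tangent, and on $\{S=0\}$ one must pass to the non-smooth extension \eqref{modelo2 e S=0}, so I would remark that this keeps $\{S=0\}$ invariant rather than letting orbits escape. Combining the four edge estimates, no forward orbit can cross $\partial\mathcal{M}$ outward, which is the asserted positive flow-invariance of $\mathcal{M}$.
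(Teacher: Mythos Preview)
Your argument is correct and follows a genuinely different route from the paper's. You work geometrically, checking the outward normal on each of the four edges of the quadrilateral $\mathcal{M}$ and invoking the Nagumo sub-tangentiality criterion; the substantive step is on the slanted edge $\{S+I=K\}$, where the crude estimate $S(A-S)\le A^{2}$ together with $(\sigma+g)I\ge(\sigma+g)(K-A)=A^{2}$ forces $\tfrac{d}{dt}(S+I)\le -pm$. The paper instead argues analytically: it sets $\phi=S+I$, bounds
\[
\dot\phi+(\sigma+g)\phi = S(A-S)-pm+(\sigma+g)S \le (\sigma+g+A)S \le (\sigma+g+A)A
\]
(using first $-S^{2}\le 0$ and then $S\le A$, essentially the same inequality you exploit), and applies Gronwall's lemma to obtain $\phi(t)\le\phi(0)e^{-(\sigma+g)t}+K\bigl(1-e^{-(\sigma+g)t}\bigr)$, which stays below $K$ whenever $\phi(0)\le K$. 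Your boundary analysis is more explicit about why each edge works, about the role of the non-smooth extension \eqref{modelo2 e S=0} on $\{S=0\}$, and about why the particular value of $K$ is forced; the paper's Gronwall bound has the side benefit of showing that $\mathcal{M}$ is actually absorbing (trajectories starting with $S+I>K$ are drawn back below $K$), not merely positively invariant. One cosmetic point: your strict inequalities $\dot S<0$ on $\{S=A\}$ and $\tfrac{d}{dt}(S+I)<0$ degenerate to equalities at the corner $(A,0)$ when $p=0$, but tangency there is still sufficient for invariance.
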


\begin{proof}
 It is easy to check that $(\mathbb{R}_0^+)^2$ is flow invariant (note that \eqref{modelo2 e S=0} leaves invariant the vertical lines $S=0$ and $I=0$).

Now, we show that if $(S_0, I_0)\in \mathcal{M}$, then $\varphi_0(t, (S_0, I_0))$, $t \in \mathbb{R}_0^+$, is contained in $ \mathcal{M}$.
Let us define  $\phi(t) = S(t) + I(t)$ associated to the trajectory $\varphi(t, (S_0, I_0))$.  Omitting the dependence of the variables on $t$ (when there is no risk of ambiguity), one knows that:

\begin{equation}
\label{proof1}
\begin{array}{lcl}
\dot{\phi} & = & \dot{S} + \dot{I} \\
\\
& = & S(A-S) - \beta I S - pm + \beta I S - (\sigma + g) I
\nonumber \\
\\
& = & S(A-S) - pm - (\sigma + g)I ,\\
\end{array}
\end{equation}

\bigbreak 

from where we deduce that:

\bigbreak 

\begin{equation}
\label{proof2}
\begin{array}{lcl}
\dot{\phi} + (\sigma + g) \phi & = & S(A-S) - pm - (\sigma + g)I + (\sigma + g)S + (\sigma + g)I
\nonumber \\
\\
& = & S(A-S) + (\sigma + g)S  \\ \\
& \leq  &   (\sigma + g + A) S.\\
\end{array}
\end{equation}
\bigbreak

If $\beta=p=0$, then the first component of \eqref{modelo2} would be the logistic growth and thus its solution is limited by $A$ (see {\bf (C2)}), a property which remains for $\beta,p > 0$. In particular, we may conclude that 
\begin{equation*}
\label{proof3}
\begin{array}{lcl}
\dot{\phi} + (\sigma + g) \phi \leq (\sigma + g + A) A.
\end{array}
\end{equation*}
The classical differential version of the Gronwall's Lemma\footnote{If $a,b \in \RR$ and $u: \RR_0^+\rightarrow \RR_0^+$ is a $C^1$ map  such that  $u'\leq au+b$, then $u(t)\leq u(0) e^{at}+\frac{b}{a}(e^{at}-1)$.} says that for all $t \in \RR^+_0$, we have:
$$
\phi(t)\leq \phi(0) e^{-(\sigma+g) t} - \frac{(\sigma + g + A) A}{(\sigma+g)}\left(e^{-(\sigma+g) t}-1\right).
$$
Taking the limit when $t\rightarrow +\infty$, we get:
\begin{eqnarray*}
0\leq \lim_{t\rightarrow +\infty} \phi(t)&\leq& \lim_{t\rightarrow +\infty}  \left[\phi(0) e^{-(\sigma+g) t} - \frac{(\sigma + g + A) A}{(\sigma+g)}\left(e^{-(\sigma+g) t}-1\right) \right]\\
&=&   \frac{(\sigma + g + A) A}{(\sigma+g)}\\
\end{eqnarray*}
 Since $\dpt \lim_{t\rightarrow +\infty} \phi(t)= \dpt \lim_{t\rightarrow +\infty}\left( S(t) + I(t)\right)$, the result follows.

 \end{proof}

\section{Main result and consequences}
We state the main results of the article, as well as its structure. We also discuss some consequences.

\subsection{Main result}
System \eqref{modelo2} may have three formal equilibria\footnote{The term ``\emph{formal equilibria}'' means that they are equilibria of the system regardless it makes sense or not in the context of the epidemic problem.}: two disease-free equilibria and one endemic equilibrium, when they exist.
The disease-free equilibria of system \eqref{modelo2} are:

\begin{eqnarray*}
\label{S0_ref}
E_0^p =(S_0^p, I_0^p)= \left(\dfrac{A-\sqrt{A^2 - 4pm}}{2},0 \right)
\end{eqnarray*}
and 
\begin{eqnarray*}
\label{S1_ref}
E_1^p = (S_1^p,I_1^p)= \left(\dfrac{A+\sqrt{A^2 - 4pm}}{2},0 \right),
\end{eqnarray*}


\noindent  where $p \leq \frac{A^2}{4m} \leq 1$, which implies $ A \leq 2\sqrt{m}$. The endemic formal equilibrium of \eqref{modelo2} is:

\begin{eqnarray*}
E_2^p =\left(S_2^p,I_2^p\right)= \left(\dfrac{\sigma + g}{\beta},\dfrac{-pm\beta^2 + A\left(\sigma+g\right)\beta-\left(\sigma+g\right)^2}{\beta^2 \left(\sigma+g\right)}\right),
\end{eqnarray*}

\noindent where $\dpt -pm\beta^2 + A\left(\sigma+g\right)\beta-\left(\sigma+g\right)^2 > 0 \, \overset{\eqref{R0}}\Leftrightarrow \,\mathcal{R}_0 > 1 + \frac{pm\beta^2}{\left( \sigma + g \right)}$.\\

We are able to study the map $f$ as a two-parameter family depending on the \emph{basic reproduction number} $\mathcal{R}_0$ and the \emph{proportion of vaccination} $p$.
 Our main result says that, in the bifurcation parameter $(\mathcal{R}_0 , p)$, $E_2^p$ is a DZ \emph{singularity} for the vector field $f \mapsto f_{(\mathcal{R}_0 , p)}$. 
\begin{maintheorem}
\label{th: mainA}
The endemic equilibrium $E_2^p$ of \eqref{modelo2} undergoes a DZ bifurcation at $(\mathcal{R}_0^{\star},p^\star) = \left(2,   \frac{A^2}{4m}\right)$. The local representations of the bifurcation curves in the space of parameters $(\mathcal{R}_0,p) \in (\mathbb{R}_0^+)^2$ are as follows:

\medskip

\begin{itemize}
\item[(i)] Saddle-node bifurcation curve:
$$
\boldsymbol{SN} = \left\{ (\mathcal{R}_0,p) \in (\mathbb{R}_0^+)^2 \, : \,  p  =  \frac{A^2}{4m} \right\}
$$

\item[(ii)] Transcritical bifurcation curve:
$$
\boldsymbol{T} = \left\{ (\mathcal{R}_0,p) \in (\mathbb{R}_0^+)^2 \, : \,  p  = \dfrac{A^2}{m} \dfrac{\left(\mathcal{R}_0-1\right)}{\mathcal{R}_0^2}  \right\}
$$

\item[(iii)] Belyakov transition curve:
$$
\boldsymbol{B_t} = \left\{ (\mathcal{R}_0,p) \in (\mathbb{R}_0^+)^2 \, : \,  p =  \dfrac{\left(-2\beta+1+2\displaystyle\sqrt{\beta\left(\mathcal{R}_0 + \beta - 2\right)}\right)A^2}{m\mathcal{R}_0^2}\right\}
$$

\item[(iv)] Heteroclinic cycle bifurcation curve:
$$
{\bf Het.} = \left\{ (\mathcal{R}_0,p) \in (\mathbb{R}_0^+)^2 \, : \,  p \approx \dfrac{4.495}{\mathcal{R}_0^{2.313}} - 0.039 \right\}
$$

\item[(v)] Hopf bifurcation curve:
$$
\boldsymbol{H} = \left\{ (\mathcal{R}_0,p) \in (\mathbb{R}_0^+)^2 \, : \,  p  = \dfrac{A^2}{m\mathcal{R}_0^2} \right\}.
$$
\end{itemize}
\end{maintheorem}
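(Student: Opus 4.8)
The plan is to build everything on the reparametrization announced as the paper's ``first breakthrough''. Since $\mathcal{R}_0=A\beta/(\sigma+g)$ by \eqref{R0}, I would solve $\beta=(\sigma+g)\mathcal{R}_0/A$ and rewrite both the endemic equilibrium and, decisively, the Jacobian $Df$ of \eqref{modelo2} as functions of $(\mathcal{R}_0,p)$, holding $A,m,\sigma,g$ fixed and positive. The immediate payoff is that at $E_2^p$ one has $\beta S_2^p=\sigma+g$, so the $(2,2)$ entry of $Df(E_2^p)$ vanishes; the trace then equals $A-2S_2^p-\beta I_2^p$ and the determinant equals $\beta^2 S_2^p I_2^p$. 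Substituting $S_2^p=A/\mathcal{R}_0$ together with the reparametrized $\beta I_2^p$ collapses the trace to $pm\mathcal{R}_0/A-A/\mathcal{R}_0$ and the determinant to $(\sigma+g)\,\beta I_2^p$, a positive multiple of $I_2^p$.

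To locate the singularity I would impose $\mathrm{trace}=0$ and $\det=0$ simultaneously. The determinant forces $I_2^p=0$ (the endemic point collapses onto the disease-free branch); feeding this into the reduced trace gives $A(1-2/\mathcal{R}_0)=0$, hence $\mathcal{R}_0^\star=2$, and $I_2^p=0$ at $\mathcal{R}_0=2$ forces the two disease-free equilibria $E_0^p,E_1^p$ to coalesce, i.e. $A^2-4pm=0$ and $p^\star=A^2/(4m)$. At this point the Jacobian becomes nilpotent and nonzero (a single $2\times 2$ Jordan block, its upper entry $-\beta A/2\neq 0$), which is exactly the double-zero configuration. To certify that it is the singularity of type $s=1$, $\theta<0$ of~(8.81) in~\cite{Kuznetsov2004}, I would compute the two quadratic normal-form coefficients in the Jordan basis, check that they are nonzero, and verify that $(\mathcal{R}_0,p)\mapsto(\mathrm{trace},\det)$ is a local diffeomorphism at $(2,A^2/(4m))$, so that the parameters unfold the singularity transversally.

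The four local curves then follow directly from the algebra already assembled. The saddle-node curve $\boldsymbol{SN}$ is the coalescence locus of $E_0^p$ and $E_1^p$, namely $A^2-4pm=0$, i.e. $p=A^2/(4m)$. The transcritical curve $\boldsymbol{T}$ is the locus $I_2^p=0$ where the endemic equilibrium crosses a disease-free one; setting the numerator of $I_2^p$ to zero and dividing by $(\sigma+g)^2$ gives $pm\mathcal{R}_0^2/A^2=\mathcal{R}_0-1$, that is $p=(A^2/m)(\mathcal{R}_0-1)/\mathcal{R}_0^2$. The Hopf curve $\boldsymbol{H}$ is $\mathrm{trace}=0$ with $\det>0$, which from the reduced trace reads $pm\mathcal{R}_0^2=A^2$, i.e. $p=A^2/(m\mathcal{R}_0^2)$. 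Finally, the Belyakov transition $\boldsymbol{B_t}$, where the two real eigenvalues collide and become complex, is the discriminant locus $\mathrm{trace}^2-4\det=0$; substituting $\sigma+g=A\beta/\mathcal{R}_0$ into $\det$ and solving the resulting quadratic in $p$ produces the stated expression, the $+$ root of the square root being the relevant one. This also explains why $\boldsymbol{B_t}$ keeps an explicit $\beta$-dependence while the other three curves do not. As a consistency check, all four curves pass through $(2,A^2/(4m))$.

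The genuine obstacle is the heteroclinic curve $\mathbf{Het.}$, a global object invisible to the Jacobian. Here I would proceed in two stages: first invoke the codimension-two unfolding of~\cite{Kuznetsov2004}, whose truncated amplitude system guarantees a homo/heteroclinic branch issuing from the DZ point tangent to $\boldsymbol{H}$; and second, identify the connection concretely as a cycle joining the two disease-free saddles $E_0^p$ and $E_1^p$ and establish its repelling (asymptotically unstable) character through the sign of the divergence along the cycle. Since its position is not algebraic, I expect to locate it by numerical continuation in the $(\mathcal{R}_0,p)$-plane and then to fit the output, which accounts for the approximate, non-polynomial form $p\approx 4.495\,\mathcal{R}_0^{-2.313}-0.039$ in the statement. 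Making this fit precise, and rigorously matching the numerically tracked branch to the one predicted by the normal form, is where I anticipate the main difficulty.
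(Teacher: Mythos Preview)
Your plan is correct and follows essentially the same route as the paper: identify the double-zero point by reducing the Jacobian at $E_2^p$ to the form with vanishing $(2,2)$ entry, read off the trace $pm\mathcal{R}_0/A-A/\mathcal{R}_0$ and determinant $(\sigma+g)\beta I_2^p$, and then obtain $\boldsymbol{SN}$, $\boldsymbol{T}$, $\boldsymbol{H}$, $\boldsymbol{B_t}$ as the loci $A^2-4pm=0$, $I_2^p=0$, $\mathrm{trace}=0$, and $\mathrm{trace}^2-4\det=0$ respectively, with the heteroclinic branch handled numerically and fitted to a power law.

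The one genuine methodological difference is your step of computing the quadratic normal-form coefficients and checking that $(\mathcal{R}_0,p)\mapsto(\mathrm{trace},\det)$ is a local diffeomorphism. The paper explicitly \emph{declines} to do this: it states that rather than verifying the nondegeneracy conditions of \cite[pp.~316--322]{Kuznetsov2004}, it instead checks directly that each of the five bifurcation curves exists and passes through $(2,A^2/(4m))$. Your route is the more orthodox one and would yield the DZ classification as a theorem; the paper's route trades that for a longer case-by-case stability analysis of $E_0^p$, $E_1^p$, $E_2^p$ across parameter regions (its Lemmas in Section~\ref{pDIF0}), which has the side benefit of producing the full phase-portrait catalogue of Figure~\ref{ZH_esquema}. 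For the heteroclinic curve the paper does exactly what you anticipate: it locates thirteen $(\mathcal{R}_0,p)$ pairs numerically and fits $p=a\mathcal{R}_0^{b}+c$, without attempting a rigorous match to the normal-form prediction.
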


The \emph{Belyakov transition} is a curve in the bifurcation space where   the eigenvalues of $ Df_{(\mathcal{R}_0 , p)} $ at $E_2^p$  change from non-real to real or vice-versa. 

\begin{figure}[h!]
\center
\includegraphics[scale=0.60]{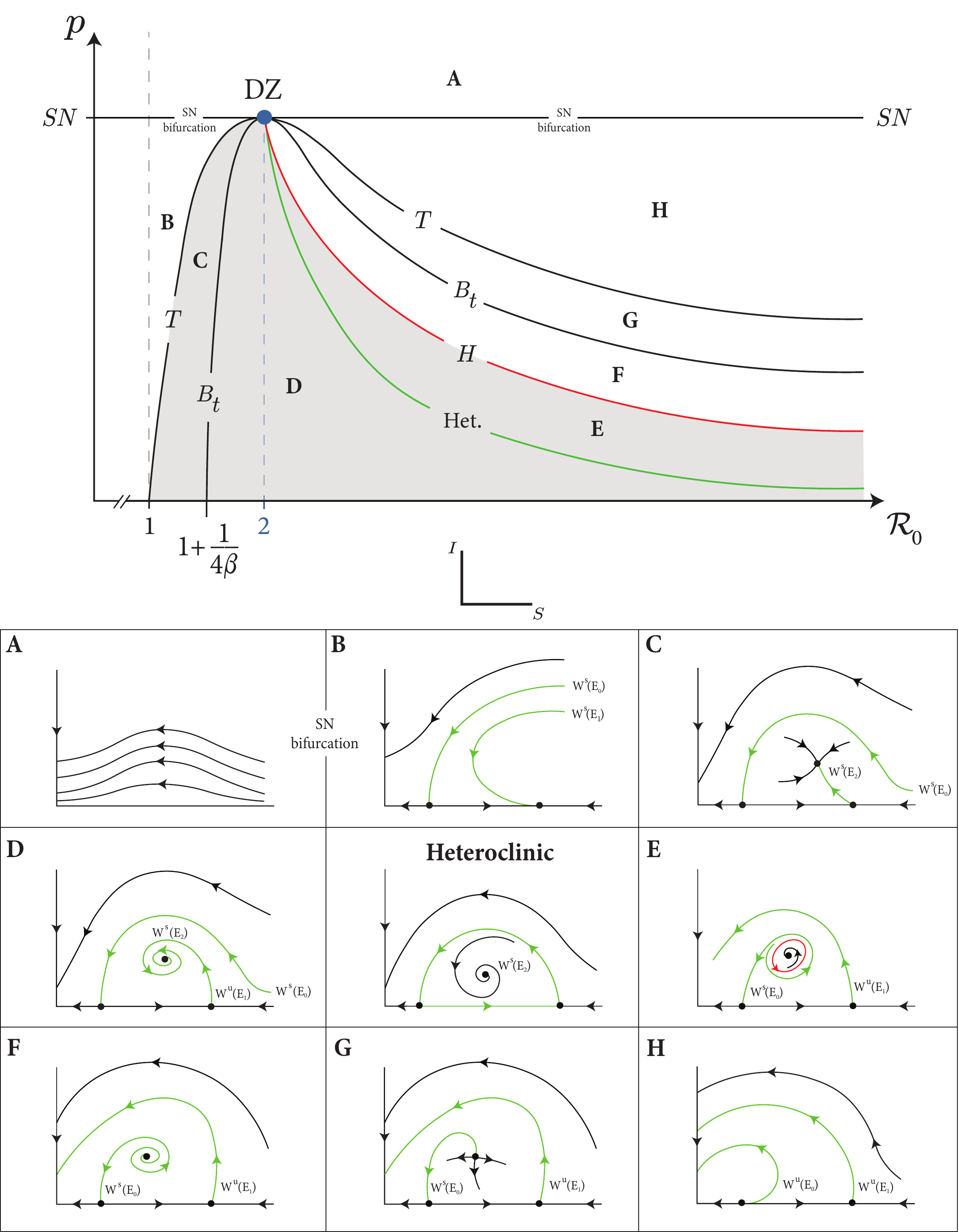}
\caption{DZ \emph{bifurcation} diagram associated to \eqref{modelo2}. In the regions {\bf C}, {\bf D} and {\bf E}, the disease  persists in the population. Compare with numerics of Figure \ref{subplots}. }
\label{ZH_esquema}
\end{figure}

\subsection{Proof of Theorem \ref{th: mainA} and the structure of the article} \label{PROVE_THEO_A}

The jacobian matrix of the vector field  associated to \eqref{modelo2} at $E_2^p$ when $(\mathcal{R}_0^{\star},p^\star) = \left(2,   \frac{A^2}{4m}\right)$ is given by: 
\begin{eqnarray}
\nonumber &&\left(\begin{array}{cc}
\dfrac{p^\star m\beta^2-\left(\sigma+g\right)^2}{\beta\left(\sigma+g\right)}  & - \left(\sigma+g\right) \\ 
\\
\dfrac{-p^\star m\beta^2 + A\left(\sigma+g\right) \beta - \left(\sigma+g\right)^2}{\beta\left(\sigma+g\right)} & 0
\end{array}\right) \\
\nonumber \\
\nonumber &=&\left(\begin{array}{cc}
\dfrac{p^\star m}{A}\mathcal{R}_0^{\star} - \dfrac{A}{\mathcal{R}_0^{\star}}  & - \left(\sigma+g\right) \\ 
\\
-\dfrac{p^\star m}{A}\mathcal{R}_0^{\star} + A - \dfrac{A}{\mathcal{R}_0^{\star}} & 0
\end{array}\right) \\
\nonumber \\
&=&\left(\begin{array}{cc}
0  & - \left(\sigma+g\right) \\ 
\\
0 & 0
\end{array}\right) . \label{JE2_DZb}
\end{eqnarray}

It is easy to verify that the matrix \eqref{JE2_DZb} is non-hyperbolic and has a double zero eigenvalue. This is why we say that, for $(\mathcal{R}_0^{\star},p^\star) = \left(2,   \frac{A^2}{4m}\right)$, the point $E_2^p$ is a \emph{singularity} of codimension 2.
The associated DZ \emph{bifurcations} exist if the vector field $f_{(\mathcal{R}_0 , p)}$, at the bifurcation point, satisfies the nondegeneracy conditions described in~\cite[pp. 316--322]{Kuznetsov2004}. 
Instead of verifying these additional conditions, we check analytically the existence of all  bifurcation curves that pass through  the point $ (\mathcal{R}_0^{\star},p^\star) =\left(2,  \frac{A^2}{4m}\right)$, as pointed out in Table \ref{notation2}.

\setlength{\tabcolsep}{12pt}
\renewcommand{\arraystretch}{1.35}

\begin{table}[h!]
\centering
\begin{tabular}{ |l|c|c|  }
 \hline
Bifurcation/Transition & Curve in Figure \ref{ZH_esquema} & Section  \\  [0.29ex]
 \hline  
 \emph{Saddle-node}  &$SN$  & \ref{s:sn}  \\ 
\emph{Transcritical} &$T$ & \ref{s:transcritical}  \\ 
\emph{Belyakov} &$B_t$ & \ref{s:Belyakov}   \\ 
\emph{Hopf} &$H$ & \ref{s:Hopf} \\ 
\emph{Heteroclinic} & Het. & \ref{s:Heteroclinic} \\ 
 \hline
\end{tabular}
\medskip
\cprotect \caption{Codimension 1 bifurcations that characterizes the DZ \emph{bifurcation} and structure of  the proof of Theorem \ref{th: mainA}.}
\label{notation2}
\end{table}

For the sake of completeness, we have added in Section  \ref{p0} a study of model \eqref{modelo2} for $p=0$. 
The curve representing the \emph{heteroclinic cycle}, denoted by $\mathcal{H}$, in the space of parameters $(\mathcal{R}_0, p)$ was obtained by interpolation. The estimated correlation between the two parameters where one observes a repelling \emph{heteroclinic cycle} is  $1$ (precision: $10^{-5}$), as the reader may check  in Section \ref{s:Heteroclinic} and Appendix \ref{apend1}.
We simulate the dynamics in all hyperbolic regions associated to the DZ \emph{singularity} in Figure \ref{subplots}. Section \ref{DandC} finishes this article.

\subsection{Biological consequences}

As a direct consequence of Theorem \ref{th: mainA}, we are able to locate three regions in the parameter space $(\mathcal{R}_0, p )$ where the disease persists 
({\bf C},  {\bf D} and {\bf E}), \emph{i.e.} there is a set with positive Lebesgue measure (in the phase space) whose $\omega$-limit  is the endemic equilibrium.  In regions {\bf C} and {\bf D}, trajectories starting ``below'' $W^s(E_0^p)$ tend to $E_2^p$. In region {\bf E}, there is a repelling periodic solution $\mathcal{C}$ (arising from the \emph{Hopf bifurcation}) which is responsible for the maintenance of an endemic region where the disease persists.

 In region {\bf E}, if we assume seasonality in the transmission rate $\beta$, the associated flow exhibits a strange repeller and hyperbolic horseshoes, as a consequence of the works~\cite{CarvalhoRodrigues2022,WangYoung2003}. The \emph{heteroclinic cycle} $\mathcal{H}$ is a repeller. 
In the regions {\bf F}, {\bf G} and  {\bf H}, the high vaccination does not play a major role in the elimination of the  disease because the carrying capacity $A$ is small compared with the number of \emph{Infected} individuals that are being generated.

\section{The case without vaccination $(p=0)$} \label{p0}

For the sake of completeness, we analyze model \eqref{modelo2} considering $p = 0$:

\begin{equation}
\label{no_vaccine}
\dot{x} = f_{(\mathcal{R}_0 , 0)}(x) \quad \Leftrightarrow \quad
\begin{cases}
&\dot{S} =  S(A-S) - \beta I S \\
\\
&\dot{I} =  \beta IS - \left(\sigma+g\right) I.
\end{cases}
\end{equation}

\bigskip

The disease-free equilibria of system \eqref{no_vaccine} are obtained by imposing $I=0$. Then we get:

$$
E_0 \equiv E_0^0= \left(0,0 \right) \quad \text{and} \quad E_1 \equiv E_1^0= \left(A,0 \right).
$$

\bigskip

\noindent With respect to the endemic equilibrium, we know that:

$$
E_2 = \left(\dfrac{\sigma + g}{\beta},\dfrac{A\beta-\left(\sigma+g\right)}{\beta^2}\right) \overset{\eqref{R0}}{=} \left(\dfrac{A}{\mathcal{R}_0}, \dfrac{A}{\beta} \left(1-\dfrac{1}{\mathcal{R}_0} \right)\right).
$$

\medskip

 The formal equilibrium $E_2\equiv E_2^0$ lies in the interior of the first quadrant when $\mathcal{R}_0 > 1$. 

\bigskip

The jacobian matrix of the vector field associated to (\ref{no_vaccine}) at a general point $E = (S, I) \in (\mathbb{R}_0^+)^2$ is given by:
 
\begin{equation}
\label{jacob}
\begin{array}{lcl}
J(E)=\left(\begin{array}{cc}
-\beta I + A - 2S & - \beta S \\ 
\\
\beta I & \beta S - \left(\sigma+g\right)
\end{array}\right)
\end{array}.
\end{equation}

\noindent Evaluating $J(E)$ at $E_0$, $E_1$ and $E_2$ we have:
 
$$
J(E_0)=\left(\begin{array}{cc}
A  & 0 \\ 
\\
0 & - \left(\sigma+g\right)
\end{array}\right) \qquad J(E_1)=\left(\begin{array}{cc}
-A  & -A\beta \\ 
\\
0 & A\beta - \left(\sigma+g\right)
\end{array}\right) .
$$
and
$$
J(E_2)=\left(\begin{array}{cc}
-\dfrac{\sigma+g}{\beta}  & - \left(\sigma+g\right) \\ 
\\
\dfrac{A\beta - \left(\sigma+g\right)}{\beta} & 0
\end{array}\right) ,
$$

\smallskip

\noindent respectively.  \\

\begin{lemma} \label{E0E1_sta}
System \eqref{no_vaccine} exhibits: \\

\begin{enumerate}
\item two disease-free equilibria, $E_0$ and \,$E_1$, such that:

\begin{enumerate}

\smallskip

\item for  all $\mathcal{R}_0 \in \RR^+$, $E_0$ is a saddle;
 
\smallskip

\item if \,$\mathcal{R}_0 < 1$, then \,$E_1$ is a sink. If \,$\mathcal{R}_0 > 1$, then \,$E_1$ is a saddle;

\smallskip

\item at \,$\mathcal{R}_0 = 1$, \,$E_1$ undergoes a transcritical bifurcation with $E_2$.\\
\end{enumerate} 

\smallskip

\item an endemic equilibrium $E_2$ such that: \\

\begin{enumerate}
 
\item if \,$1<\mathcal{R}_0 < 1+\frac{1}{4\beta}$, then \,$E_2$ is a stable node; 

\smallskip

\item if \,$\mathcal{R}_0 > 1 + \frac{1}{4\beta}$, then \,$E_2$ is a stable focus;

\smallskip

\item at \,$\mathcal{R}_0 = 1+\frac{1}{4\beta}$, \,$E_2$ undergoes a Belyakov transition. \\
\end{enumerate}
\end{enumerate} 
\end{lemma}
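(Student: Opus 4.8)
The plan is to read off the local type of each equilibrium directly from the three Jacobians $J(E_0)$, $J(E_1)$, $J(E_2)$ already displayed, using the trace--determinant--discriminant classification of planar linear systems, and to handle the two degenerate values of $\mathcal{R}_0$ separately by a dedicated bifurcation argument. Throughout I would exploit the identity $A\beta - (\sigma+g) = (\sigma+g)(\mathcal{R}_0 - 1)$, which is immediate from \eqref{R0} since $\mu + d + g = \sigma + g$.

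For item (1), the matrix $J(E_0)$ is diagonal with eigenvalues $A>0$ and $-(\sigma+g)<0$ by {\bf (C1)}, so $E_0$ is a saddle for every $\mathcal{R}_0$, which is (1)(a). The matrix $J(E_1)$ is upper triangular, so its eigenvalues are $-A<0$ and $A\beta-(\sigma+g)=(\sigma+g)(\mathcal{R}_0-1)$; the sign of the latter is that of $\mathcal{R}_0-1$, giving a sink for $\mathcal{R}_0<1$ (both eigenvalues negative) and a saddle for $\mathcal{R}_0>1$, which proves (1)(b). For (1)(c) I would first observe that at $\mathcal{R}_0=1$ the endemic point $E_2=(A/\mathcal{R}_0,(A/\beta)(1-1/\mathcal{R}_0))$ collides with $E_1=(A,0)$, while simultaneously the second eigenvalue of $J(E_1)$ vanishes, so $E_1$ becomes non-hyperbolic with a simple zero eigenvalue. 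To certify that the crossing is transcritical (rather than a saddle-node) I would apply \emph{Sotomayor's theorem} with $\beta$ as bifurcation parameter (which drives $\mathcal{R}_0$ through $1$): compute the right and left null vectors $v=(-\beta,1)$ and $w=(0,1)$ of $J(E_1)$ at $\mathcal{R}_0=1$, and verify the transcritical nondegeneracy conditions $w^\top\,\partial_\beta f=0$, $w^\top\bigl(\partial_\beta(D_xf)\,v\bigr)\neq 0$ and $w^\top\bigl(D^2_x f(v,v)\bigr)\neq 0$. The first holds because the disease-free line $I=0$ is invariant (so $\partial_\beta f$ vanishes along $E_1$), while the other two reduce to a one-line computation with the quadratic part of $f$.

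For item (2), the decisive quantities are $\operatorname{tr}J(E_2)=-(\sigma+g)/\beta<0$ and $\det J(E_2)=(\sigma+g)^2(\mathcal{R}_0-1)/\beta$, the latter positive precisely when $\mathcal{R}_0>1$; hence for $\mathcal{R}_0>1$ both eigenvalues have negative real part and $E_2$ is asymptotically stable. To separate node from focus I would inspect the discriminant
\[
\Delta = \bigl(\operatorname{tr}J(E_2)\bigr)^2 - 4\det J(E_2) = \frac{(\sigma+g)^2}{\beta^2}\bigl[\,1 - 4\beta(\mathcal{R}_0 - 1)\,\bigr],
\]
whose sign is governed by $1-4\beta(\mathcal{R}_0-1)$. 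Thus $\Delta>0$ (real eigenvalues, stable node) exactly when $\mathcal{R}_0<1+\frac{1}{4\beta}$, giving (2)(a), and $\Delta<0$ (complex eigenvalues, stable focus) when $\mathcal{R}_0>1+\frac{1}{4\beta}$, giving (2)(b). At $\mathcal{R}_0=1+\frac{1}{4\beta}$ one has $\Delta=0$, so the pair of eigenvalues passes from real to non-real; by the definition of \emph{Belyakov transition} adopted in this paper this is precisely (2)(c).

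I expect the only genuinely delicate point to be (1)(c): the eigenvalue computations in (1)(a), (1)(b) and all of (2) follow immediately from the displayed matrices, whereas certifying the \emph{transcritical} character of the $\mathcal{R}_0=1$ crossing requires the second-order Sotomayor check and some care in introducing the bifurcation parameter consistently, so that varying $\beta$ both moves $\mathcal{R}_0$ through $1$ and keeps the equilibrium expressions well defined.
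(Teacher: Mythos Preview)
Your proposal is correct. For parts (1)(a), (1)(b) it coincides with the paper's argument. For part (2) you use the trace--determinant--discriminant classification, which is cleaner than the paper's route: the paper writes out the two eigenvalues of $J(E_2)$ explicitly and then performs a somewhat lengthy chain of algebraic manipulations on the radicand $\Delta$ to isolate the threshold $\mathcal{R}_0 = 1+\tfrac{1}{4\beta}$; your single-line formula $\Delta = (\sigma+g)^2\beta^{-2}\bigl[1-4\beta(\mathcal{R}_0-1)\bigr]$ reaches the same conclusion more transparently. For (1)(c) you actually go further than the paper, which simply observes the collision $E_2\to E_1$ and the stability exchange and declares the bifurcation transcritical without checking nondegeneracy; your Sotomayor verification (with $w^\top\partial_\beta f=0$, $w^\top(\partial_\beta D_xf)v=A\neq 0$, $w^\top D^2_xf(v,v)=-2\beta^2\neq 0$) supplies the missing justification.
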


\begin{proof}
\begin{enumerate}
\item 
The eigenvalues of $J(E_0)$ are \,$-\left(\sigma+g\right) < 0$ and $ A>0$. Since the eigenvalues of $J(E_0)$ have real part with opposite signs, then $E_0$ is a saddle for  all $\mathcal{R}_0 \in \RR^+$.
 The eigenvalues of $J(E_1)$ are $A \beta - \left(\sigma+g\right)$ and $ -A<0$. If

\begin{eqnarray*}
A\beta - \left(\sigma+g\right) < 0 \quad \Leftrightarrow \quad \dfrac{A\beta}{\sigma + g} < 1 \quad \overset{\eqref{R0}}{\Leftrightarrow} \quad  \mathcal{R}_0 <1 , \label{E1stable}
\end{eqnarray*}
then $E_1$ is a sink. If \,$\mathcal{R}_0 > 1$, then $E_1$ is a saddle. 
Therefore, in the vertical direction, $E_1$ interchanges its stability with $E_2$ from stable to unstable  at $\mathcal{R}_0 = 1$ (\emph{transcritical bifurcation}). \\

\item The eigenvalues of $J(E_2)$ are given by:

$$\lambda_1 \coloneqq \dfrac{- \left(\sigma+g\right) - \sqrt{\Delta}}{2\beta} \qquad \text{and} \qquad \lambda_2 \coloneqq \dfrac{- \left(\sigma+g\right) + \sqrt{\Delta}}{2\beta},$$
where $\Delta = -4\left(\sigma+g\right)\left[ \left(-\beta - \dfrac{1}{4} \right)g + A\beta^2 - \sigma \beta - \dfrac{\sigma}{4} \right]$. Since $\beta >0$ and $\sigma+g>0$, it is easy to verify that $\lambda_1$ has negative real part. The eigenvalue $\lambda_2$ has negative real part if and only if

\begin{eqnarray}
\nonumber && -\left( \sigma + g \right) + \sqrt{\Delta} < 0 \\
\nonumber && \\
\nonumber \Leftrightarrow&& \sqrt{\Delta} < \sigma + g  \\
\nonumber && \\
\nonumber \Leftrightarrow &&
\begin{cases} &\Delta \geq 0 \\
\\
&\Delta < \left( \sigma + g \right)^2
\end{cases} \\
\nonumber &&\\
\nonumber \Leftrightarrow && 
\begin{cases} & -4\left(\sigma+g\right)\left[ \left(-\beta - \dfrac{1}{4} \right)g + A\beta^2 - \sigma \beta - \dfrac{\sigma}{4} \right] \geq 0 \\
\\
&-4\left(\sigma+g\right)\left[ \left(-\beta - \dfrac{1}{4} \right)g + A\beta^2 - \sigma \beta - \dfrac{\sigma}{4} \right] < \left( \sigma + g \right)^2
\end{cases} \\ 
\nonumber &&\\
\nonumber \Leftrightarrow &&
\begin{cases} & \left(-\beta - \dfrac{1}{4} \right)g + A\beta^2 - \sigma \beta - \dfrac{\sigma}{4} \leq 0 \\
\\
& -4\left[ \left(-\beta - \dfrac{1}{4} \right)g + A\beta^2 - \sigma \beta - \dfrac{\sigma}{4} \right] < \sigma + g
\end{cases} \\ 
\nonumber &&\\
\nonumber \Leftrightarrow &&
\begin{cases} & 4A\beta^2 - \left(4 \sigma \beta + \sigma \right) - \left( 4\beta + 1 \right)g \leq 0 \\
\\
& g - A\beta +\sigma  < 0 
\end{cases} \\ 
\nonumber &&\\
\nonumber \Leftrightarrow &&
\begin{cases} & 4\beta \mathcal{R}_0 - \dfrac{\sigma\left(4 \beta + 1 \right)}{\sigma + g} - \dfrac{g\left(4 \beta + 1 \right)}{\sigma + g} \leq 0 \\
\\
& \mathcal{R}_0 > 1
\end{cases} \\ 
\nonumber &&\\
\nonumber \Leftrightarrow &&
\begin{cases} & \mathcal{R}_0 \leq  1 + \dfrac{1}{4\beta} \\
\\
& \mathcal{R}_0 > 1 .
\end{cases} 
\end{eqnarray}

\medskip

If $1 < \mathcal{R}_0 < 1+\frac{1}{4\beta}$, then $\lambda_2 < 0$ and $E_2$ is a stable node. In an analogous way, if $\mathcal{R}_0 > 1 + \frac{1}{4\beta}$, then $\Delta < 0$ and $E_2$ is a stable focus. Hence, $E_2$ evolves from stable node to stable focus at $\mathcal{R}_0 = 1+\frac{1}{4\beta}$ (\emph{Belyakov transition}).
\end{enumerate}
\end{proof}

In Figure \ref{without_p}, one observes the scheme of the equilibria stability of system \eqref{no_vaccine} for different values of $\mathcal{R}_0$.  The {\it transcritical bifurcation} at  $\mathcal{R}_0 = 1$ represents the threshold for the existence of the endemic disease in the population, agreeing well with the empirical belief.
\begin{figure}[h!]
\center
\includegraphics[scale=0.30]{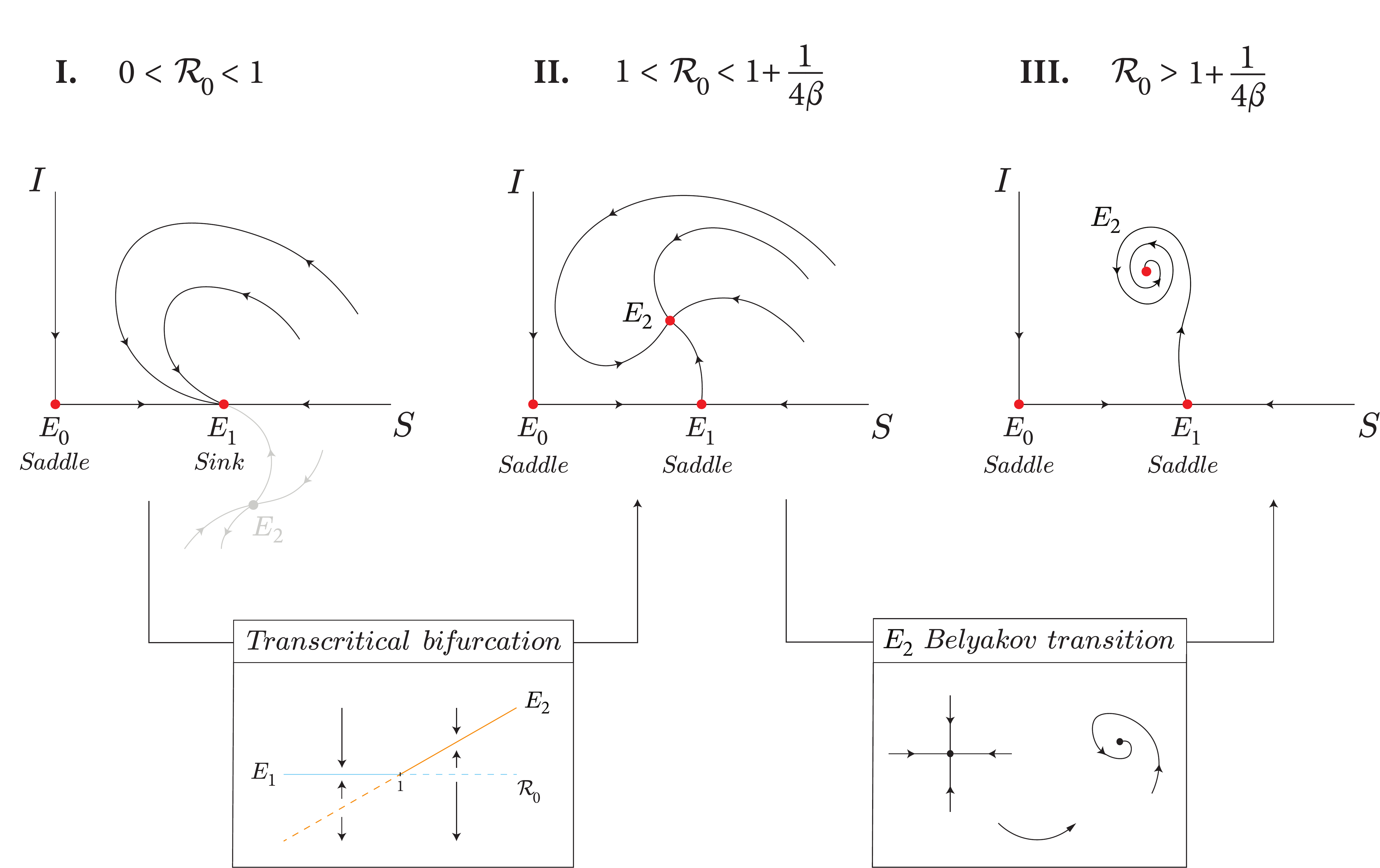}
\caption{Phase diagram of \eqref{no_vaccine} for different values of $\mathcal{R}_0$. {\bf I.} $E_1$ is a \emph{global attractor} when restricted to the closure of the first quadrant. {\bf II.} Besides the $E_0$ and $E_1$, there exists a stable node $E_2$. {\bf III.} Besides the $E_0$ and $E_1$, there exists a stable focus $E_2$. From \textbf{I.} to \textbf{II.} $E_1$ undergoes a \emph{transcritical bifurcation} at $\mathcal{R}_0 = 1$. From \textbf{II.} to \textbf{III.} $E_2$ evolves a \emph{Belyakov transition} at $\mathcal{R}_0 = 1+\frac{1}{4\beta}$.}
\label{without_p}
\end{figure}


\section{The case with constant vaccination $(p>0)$} \label{pDIF0}

\noindent We  analyze  \eqref{modelo2} by considering $p > 0$ and $\mathcal{R}_0 > 1$ (see {\bf (C3)}):

\begin{equation}
\label{with_vaccine}
\dot{x} =  f_{(\mathcal{R}_0 , 0)}(x) \quad \Leftrightarrow \quad
\begin{cases} &\dot{S} =  S(A-S) - \beta I S - pm \\
\\
&\dot{I} =  \beta IS - \left(\sigma+g\right) I.
\end{cases}
\end{equation}

\bigskip

For $A$ and $m$ fixed, we settle the following maps that will be used throughout this text:
\begin{equation}
\label{pes_eq}
p_{SN}(\mathcal{R}_0) \coloneqq \dfrac{A^2}{4m} \quad \text{,} \quad p_T(\mathcal{R}_0) \coloneqq \dfrac{A^2}{m} \dfrac{\left(\mathcal{R}_0-1\right)}{\mathcal{R}_0^2}  \quad \text{and} \quad p_H(\mathcal{R}_0) \coloneqq \dfrac{A^2}{m\mathcal{R}_0^2},
\end{equation}

\medskip

\noindent for $\mathcal{R}_0 > 1$. It is easy to verify that:\\

\begin{itemize}
\item  $p_{SN}$ is a constant map and \\
\item if $1 < \mathcal{R}_0 < 2$, then $p_H (\mathcal{R}_0)>p_{SN}(\mathcal{R}_0)$, which does not make sense (because $E_2^p$ does not exist in the first quadrant as a consequence of Lemmas \ref{pes} and \ref{E2_EXIS}). This is the reason why we consider the set $[2, +\infty[$ as the domain of $p_H$.\\
\end{itemize}
 
\subsection{Preparatory section}
 
We state a preliminary result that will be used in the sequel.

\begin{lemma} \label{pes}
The following statements hold for the maps given in  \eqref{pes_eq}:\\

\begin{enumerate}
\item If \,$\mathcal{R}_0 > 1$ and \,$\mathcal{R}_0 \neq 2$, then \,$p_T  (\mathcal{R}_0)< p_{SN} (\mathcal{R}_0)$;
\medskip
\item   \,$p_H (2)= p_T(2) = p_{SN}(2)$;
\medskip
\item If \,$\mathcal{R}_0 > 2$, then  \,$p_H (\mathcal{R}_0) < p_T(\mathcal{R}_0)<p_{SN}(\mathcal{R}_0)$.\\
\end{enumerate}
\end{lemma}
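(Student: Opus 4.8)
The plan is purely computational: each of the three claims is an inequality (or equality) between the explicit rational functions in \eqref{pes_eq}, and since all of them carry the common positive factor $A^2/m$ (and have positive denominators for $\mathcal{R}_0>1$), every comparison reduces to an elementary polynomial inequality in $\mathcal{R}_0$ alone. So the strategy throughout is to cancel the factor $A^2/m>0$, clear the positive denominator $\mathcal{R}_0^2$, and rewrite the resulting inequality in factored form.

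First I would treat (1). Subtracting $p_T(\mathcal{R}_0)$ from $p_{SN}(\mathcal{R}_0)$ and factoring out $A^2/m$, the claim $p_T(\mathcal{R}_0)<p_{SN}(\mathcal{R}_0)$ is equivalent, after multiplying by $4\mathcal{R}_0^2>0$, to
\begin{equation*}
4(\mathcal{R}_0-1)<\mathcal{R}_0^2 \quad\Longleftrightarrow\quad 0<\mathcal{R}_0^2-4\mathcal{R}_0+4=(\mathcal{R}_0-2)^2,
\end{equation*}
which holds precisely when $\mathcal{R}_0\neq 2$. This simultaneously identifies $\mathcal{R}_0=2$ as the unique point of equality, which feeds directly into (2). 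For (2) I would simply substitute $\mathcal{R}_0=2$ into the three maps: $p_{SN}(2)=A^2/(4m)$ by definition, $p_H(2)=A^2/(4m)$, and $p_T(2)=\tfrac{A^2}{m}\cdot\tfrac{1}{4}=A^2/(4m)$, so all three coincide.

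Finally, for (3) I would combine (1) with one further elementary comparison. Since $\mathcal{R}_0>2$ implies $\mathcal{R}_0\neq 2$, the inequality $p_T(\mathcal{R}_0)<p_{SN}(\mathcal{R}_0)$ is exactly case (1). It remains to show $p_H(\mathcal{R}_0)<p_T(\mathcal{R}_0)$; cancelling $A^2/m$ and multiplying by $\mathcal{R}_0^2>0$ gives
\begin{equation*}
1<\mathcal{R}_0-1 \quad\Longleftrightarrow\quad \mathcal{R}_0>2,
\end{equation*}
which is the standing hypothesis. Chaining the two inequalities yields $p_H(\mathcal{R}_0)<p_T(\mathcal{R}_0)<p_{SN}(\mathcal{R}_0)$.

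There is no genuine obstacle here; the only points requiring care are bookkeeping ones, namely verifying that every quantity I divide or multiply by (that is, $A^2/m$, $\mathcal{R}_0^2$, and the constant $4$) is strictly positive under the hypothesis $\mathcal{R}_0>1$, so that the direction of each inequality is preserved, and keeping track of strict versus non-strict comparison at the boundary value $\mathcal{R}_0=2$, where the three curves meet.
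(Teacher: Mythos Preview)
Your proposal is correct and follows essentially the same approach as the paper: reducing each comparison to an elementary polynomial inequality in $\mathcal{R}_0$, obtaining $(\mathcal{R}_0-2)^2>0$ for item~(1), direct evaluation for item~(2), and the equivalence $1<\mathcal{R}_0-1\Leftrightarrow\mathcal{R}_0>2$ combined with item~(1) and transitivity for item~(3).
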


\begin{proof}
\begin{enumerate}
\item 
From \eqref{pes_eq}, it follows that:
\begin{eqnarray}
\nonumber && p_T(\mathcal{R}_0) < p_{SN} (\mathcal{R}_0) \\
\nonumber && \\
\nonumber \Leftrightarrow&& \dfrac{A^2}{m} \dfrac{\left(\mathcal{R}_0-1\right)}{\mathcal{R}_0^2} < \dfrac{A^2}{4m}\\
\nonumber && \\
\nonumber \Leftrightarrow && \dfrac{\mathcal{R}_0-1}{\mathcal{R}_0^2} < \dfrac{1}{4} \\
\nonumber \\
\nonumber \Leftrightarrow&& \mathcal{R}_0^2 - 4\mathcal{R}_0 + 4 > 0\\
\nonumber \\
\nonumber \Leftrightarrow&& (\mathcal{R}_0-2)^2 > 0. \\
\end{eqnarray}
 \item This item follows if we evaluate $p_H$, $p_T$ and $p_{SN}$  at $\mathcal{R}_0 = 2$.\\
\item 

From \eqref{pes_eq}, we have:
\begin{eqnarray}
\nonumber && p_H  (\mathcal{R}_0)< p_T  (\mathcal{R}_0) \\
\nonumber && \\
\nonumber \Leftrightarrow&& \dfrac{A^2}{m\mathcal{R}_0^2} < \dfrac{A^2}{m} \dfrac{\left(\mathcal{R}_0-1\right)}{\mathcal{R}_0^2}  \\
\nonumber && \\
\nonumber \Leftrightarrow && \mathcal{R}_0-1 > 1 \\
\nonumber \\
\nonumber \Leftrightarrow&&\mathcal{R}_0  > 2. \\ 
\end{eqnarray}

The remaining inequality follows from the previous item and transitivity.
\end{enumerate}
\end{proof}

\subsection{Saddle-node bifurcation}
\label{s:sn} The general jacobian matrix of   (\ref{with_vaccine}) at $E^p = (S^p, I^p) \in (\mathbb{R}_0^+)^2$ coincides with that of \eqref{jacob}. At $E_0^p$, $E_1^p$ and $E_2^p$, the matrix \eqref{jacob} takes the form:
$$
J(E_0^p)=\left(\begin{array}{cc}
A - 2S_0^p  & -\beta S_0^p \\ 
\\
0 & \beta S_0^p - \left(\sigma+g\right)
\end{array}\right), \qquad J(E_1^p)=\left(\begin{array}{cc}
A - 2S_1^p & -\beta S_1^p \\ 
\\
0 & \beta S_1^p - \left(\sigma+g\right)
\end{array}\right) 
$$

and

\begin{eqnarray}
\label{JE2_16}
J(E_2^p)=\left(\begin{array}{cc}
\dfrac{pm\beta^2-\left(\sigma+g\right)^2}{\beta\left(\sigma+g\right)}  & - \left(\sigma+g\right) \\ 
\\
\dfrac{-pm\beta^2 + A\left(\sigma+g\right) \beta - \left(\sigma+g\right)^2}{\beta\left(\sigma+g\right)} & 0
\end{array}\right),
\end{eqnarray}
 respectively. \\ 


\begin{lemma} 
The following statements hold for model \eqref{with_vaccine}:\\

\begin{enumerate}
\item If \,$ p > p_{SN}(\mathcal{R}_0 )$, then $E_0^p$ and $E_1^p$ do not exist in the first quadrant of $(S,I)$. \\
\item If \,$p_T (\mathcal{R}_0 )< p < p_{SN}(\mathcal{R}_0 )$, then:

\smallskip

\begin{enumerate}

\item $E_0^p$ is a saddle and $E_1^p$ is a sink for \,$1 < \mathcal{R}_0 < 2$;
\smallskip
\item $E_0^p$ is a source and $E_1^p$ is a saddle for \,$\mathcal{R}_0 > 2$.\\
\end{enumerate}

\smallskip

\item If \,$p<p_T(\mathcal{R}_0 )$ and \,$\mathcal{R}_0 > 1$, then \,$E_0^p$ and \,$E_1^p$ are saddles. \\
\end{enumerate}
\end{lemma}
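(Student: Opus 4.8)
The plan is to exploit the fact that both Jacobians $J(E_0^p)$ and $J(E_1^p)$ displayed above are upper triangular, so that their eigenvalues are read off the diagonal, and to reduce every sign question to the position of the single quantity $A/\mathcal{R}_0 = (\sigma+g)/\beta$ relative to the two abscissae $S_0^p$ and $S_1^p$.

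First I would dispose of statement (1). The abscissae $S_0^p$ and $S_1^p$ are the roots of $q(S) \coloneqq S^2 - AS + pm$ (obtained by setting $I=0$ and $\dot S = 0$), whose discriminant is $A^2 - 4pm$. This is nonnegative precisely when $p \leq A^2/(4m) = p_{SN}(\mathcal{R}_0)$; hence for $p > p_{SN}(\mathcal{R}_0)$ the roots are non-real and neither $E_0^p$ nor $E_1^p$ exists in the first quadrant. For $p < p_{SN}$ both roots are real, positive and distinct, with $0 < S_0^p < A/2 < S_1^p < A$. For the stability claims (2) and (3) I would then compute the two diagonal eigenvalues at each equilibrium. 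The first (horizontal) eigenvalue is $A - 2S_i^p$; since $2S_0^p = A - \sqrt{A^2-4pm}$ and $2S_1^p = A + \sqrt{A^2-4pm}$, this equals $+\sqrt{A^2-4pm} > 0$ at $E_0^p$ and $-\sqrt{A^2-4pm} < 0$ at $E_1^p$ throughout the region $p < p_{SN}$. The second (vertical) eigenvalue is $\beta S_i^p - (\sigma+g)$, whose sign coincides with that of $S_i^p - (\sigma+g)/\beta = S_i^p - A/\mathcal{R}_0$, so everything reduces to locating $A/\mathcal{R}_0$ with respect to the interval $[S_0^p, S_1^p]$.

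The key computation — and the one place where the curve $p_T$ enters — is the evaluation
\[
q\!\left(\frac{A}{\mathcal{R}_0}\right) = \frac{A^2}{\mathcal{R}_0^2} - \frac{A^2}{\mathcal{R}_0} + pm = m\bigl(p - p_T(\mathcal{R}_0)\bigr).
\]
Since $q$ is an upward parabola vanishing at $S_0^p$ and $S_1^p$, the sign of $q(A/\mathcal{R}_0)$ tells me whether $A/\mathcal{R}_0$ lies outside $[S_0^p,S_1^p]$ (when $p > p_T$) or strictly inside it (when $p < p_T$). To decide which side of the interval $A/\mathcal{R}_0$ falls on in the outside case, I would use the symmetry of the roots about their midpoint $A/2$: the condition $\mathcal{R}_0 < 2$ (resp. $\mathcal{R}_0 > 2$) places $A/\mathcal{R}_0$ to the right (resp. left) of $A/2$, so it lies beyond $S_1^p$ (resp. below $S_0^p$).

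Combining the two eigenvalue signs then yields every assertion. For $p_T < p < p_{SN}$ and $1 < \mathcal{R}_0 < 2$ one has $S_0^p, S_1^p < A/\mathcal{R}_0$, so $E_0^p$ has eigenvalues of opposite sign (saddle) and $E_1^p$ has two negative eigenvalues (sink); for $\mathcal{R}_0 > 2$ one has $A/\mathcal{R}_0 < S_0^p, S_1^p$, making $E_0^p$ a source and $E_1^p$ a saddle; and for $p < p_T$ the quantity $A/\mathcal{R}_0$ is sandwiched between the roots, so both equilibria carry one eigenvalue of each sign and are saddles. The only delicate point is the bookkeeping of which root $A/\mathcal{R}_0$ overtakes as the parameters vary; the symmetry argument about $A/2$ is what makes this clean, and it is precisely where the dichotomy $\mathcal{R}_0 \lessgtr 2$ — equivalently the ordering of $p_T$ and $p_{SN}$ supplied by Lemma \ref{pes} — is genuinely used.
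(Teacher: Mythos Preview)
Your argument is correct and reaches the same conclusions, but by a genuinely different and cleaner route than the paper. The paper proceeds by brute force: for each of the four vertical eigenvalues $\beta S_i^p-(\sigma+g)$ it substitutes the explicit formula $S_i^p=(A\pm\sqrt{A^2-4pm})/2$, isolates the radical, splits into cases according to the sign of $A(\mathcal{R}_0-2)/\mathcal{R}_0$ before squaring, and simplifies until the thresholds $p_T$ and $p_{SN}$ appear. You replace this repeated case analysis by the single identity $q(A/\mathcal{R}_0)=m\bigl(p-p_T(\mathcal{R}_0)\bigr)$ for the quadratic $q(S)=S^2-AS+pm$ whose roots are $S_0^p,S_1^p$; the sign of this value locates $A/\mathcal{R}_0$ inside or outside $[S_0^p,S_1^p]$, and the comparison with the vertex abscissa $A/2$ settles on which side. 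What you gain is brevity and the elimination of the squaring steps (and hence of any worry about extraneous solutions or missed sign conditions); what the paper's computation offers in exchange is that it is entirely mechanical and requires no geometric observation about the parabola. Either way, the dichotomy $\mathcal{R}_0\lessgtr 2$ enters at exactly the same point and for the same reason.
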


\begin{proof}

The eigenvalues of $J(E_0^p)$ are $A-2S_0^p$ and $\beta S_0^p - (\sigma+g)$. We know that

\begin{eqnarray} 
\nonumber && A-2S_0^p > 0 \\
\nonumber && \\
\nonumber &\Leftrightarrow& \displaystyle \dfrac{2A - 2A + 2\sqrt{A^2 - 4pm}}{2} > 0 \\
\nonumber && \\
\nonumber &\Leftrightarrow& \displaystyle \sqrt{A^2 - 4pm} > 0 \quad \text{if $p \leq p_{SN}(\mathcal{R}_0 )$}
\end{eqnarray}

\begin{enumerate}
\item 
If $p> p_{SN}(\mathcal{R}_0 )$, then $E_0^p$ and $E_1^p$ do not exist. \\
\item We may write: 
\begin{eqnarray} 
\nonumber && \beta S_0^p - (\sigma+g)  < 0 \\
\nonumber && \\
\nonumber &\Leftrightarrow& S_0^p < \dfrac{\sigma + g}{\beta} \\
\nonumber && \\
\nonumber & \overset{\eqref{S0_ref}}{\Leftrightarrow}& \displaystyle \dfrac{A-\sqrt{A^2 - 4pm}}{2} <  \dfrac{\sigma + g}{\beta} \\
\nonumber && \\
\nonumber & \Leftrightarrow& \displaystyle \sqrt{A^2 - 4pm} > A - \dfrac{2\left(\sigma+g\right)}{\beta} \\
\nonumber && \\
\nonumber & \Leftrightarrow& \displaystyle \sqrt{A^2 - 4pm} > \dfrac{A\mathcal{R}_0}{\mathcal{R}_0}  - \dfrac{2A}{\mathcal{R}_0} \\
\nonumber && \\
\nonumber &\Leftrightarrow & \displaystyle \sqrt{A^2 - 4pm} > \dfrac{A\left(\mathcal{R}_0 - 2\right)}{\mathcal{R}_0}  \\
\nonumber && \\
\nonumber &\Leftrightarrow &
\begin{cases} &A^2 - 4pm \geq 0 \quad , \qquad 1 < \mathcal{R}_0 < 2 \\
\\
&A^2 - 4pm > \dfrac{A^2\left(\mathcal{R}_0 - 2\right)^2}{\mathcal{R}_0^2} \quad , \qquad \mathcal{R}_0 \geq 2
\end{cases} \\
\nonumber &&\\
\nonumber &\overset{\eqref{pes_eq}}{\Leftrightarrow}& 
\begin{cases} &p \leq p_{SN} \quad , \qquad 1 < \mathcal{R}_0 < 2 \\
\\
& p < \dfrac{A^2\left(\mathcal{R}_0 - 1\right)}{m\mathcal{R}_0^2} \quad , \qquad \mathcal{R}_0 \geq 2
\end{cases} \\
\nonumber &&\\
&\overset{\eqref{pes_eq}}{\Rightarrow}& 
\begin{cases} &p < p_{SN}(\mathcal{R}_0) \quad , \qquad 1 < \mathcal{R}_0 < 2 \\
\\
&p < p_T(\mathcal{R}_0) \quad , \qquad \mathcal{R}_0 > 2 \label{E0_saddle}
\end{cases},
\end{eqnarray}
then the eigenvalues of $J(E_0^p)$ have real part with opposite signs and $E_0^p$ is a saddle. On the other hand, if

\begin{eqnarray}
\nonumber && \beta S_0 - (\sigma+g)  > 0 \\
\nonumber && \\
\nonumber &\Leftrightarrow& S_0 > \dfrac{\sigma + g}{\beta} \\
\nonumber && \\
\nonumber & \Leftrightarrow&  \sqrt{A^2 - 4pm} < \dfrac{A\left(\mathcal{R}_0 -2 \right)}{\mathcal{R}_0} \\
\nonumber && \\
\nonumber &\Leftrightarrow &0 \leq A^2 - 4pm < \dfrac{A^2 \left(\mathcal{R}_0 - 2 \right)^2}{\mathcal{R}_0^2} \quad , \qquad \mathcal{R}_0 > 2 \\
\nonumber && \\
&\overset{\eqref{pes_eq}}{\Rightarrow}& p_T(\mathcal{R}_0) < p < p_{SN}(\mathcal{R}_0) \quad , \qquad \mathcal{R}_0 > 2 \label{E0_source_E1_saddle},
\end{eqnarray}
then the eigenvalues of $J(E_0^p)$ have real part with positive signs, and $E_0^p$ is a source. Moreover, the eigenvalues of $J(E_1^p)$ are $A - 2S_1^p < 0$ and $\beta S_1^p - \left(\sigma+g\right)$. Hence, if

\begin{eqnarray}
\nonumber && \beta S_1^p - (\sigma+g)  < 0 \\
\nonumber && \\
\nonumber &\Leftrightarrow& S_1^p < \dfrac{\sigma + g}{\beta} \\
\nonumber && \\
\nonumber &\Leftrightarrow& \sqrt{A^2 - 4pm} < \dfrac{A\left(2 - \mathcal{R}_0 \right)}{\mathcal{R}_0} \\
\nonumber && \\
\nonumber &\Leftrightarrow& 0 \leq A^2 - 4pm < \dfrac{A^2 \left(2 - \mathcal{R}_0 \right)^2}{\mathcal{R}_0^2} \quad , \qquad 1 < \mathcal{R}_0 < 2 \\
\nonumber && \\
&\overset{\eqref{pes_eq}}{\Rightarrow}& p_T(\mathcal{R}_0) < p < p_{SN}(\mathcal{R}_0) \quad , \qquad 1 < \mathcal{R}_0 < 2 , \label{E1_sink}
\end{eqnarray}
then both eigenvalues of $J(E_1^p)$ have negative real part and $E_1^p$ is a sink. If
\begin{eqnarray}
\nonumber && \beta S_1^p - (\sigma+g)  > 0 \\
\nonumber && \\
\nonumber &\Leftrightarrow& S_1^p > \dfrac{\sigma + g}{\beta} \\
\nonumber && \\
\nonumber &\Leftrightarrow& \sqrt{A^2 - 4pm} > \dfrac{A\left(2 - \mathcal{R}_0 \right)}{\mathcal{R}_0} \\
\nonumber && \\
\nonumber &\Leftrightarrow &
\begin{cases} &A^2 - 4pm > \dfrac{A^2\left(2 - \mathcal{R}_0 \right)^2}{\mathcal{R}_0^2} \quad , \qquad 1 < \mathcal{R}_0 \leq 2 \\
\\
&A^2 - 4pm \geq 0 \quad , \qquad \mathcal{R}_0 > 2
\end{cases} \\
\nonumber &&\\
\nonumber &\Leftrightarrow &
\begin{cases} &p < \dfrac{A^2}{m}\left(\dfrac{\mathcal{R}_0 - 1}{\mathcal{R}_0^2}\right) \quad , \qquad 1 < \mathcal{R}_0 \leq 2 \\
\\
&p \leq \dfrac{A^2}{4m} \quad , \qquad \mathcal{R}_0 > 2
\end{cases} \\
\nonumber &&\\
&\overset{\eqref{pes_eq}}{\Rightarrow}& 
\begin{cases} &p < p_T(\mathcal{R}_0) \quad , \qquad 1 < \mathcal{R}_0 < 2 \\
\\
&p < p_{SN}(\mathcal{R}_0) \quad , \qquad \mathcal{R}_0 > 2 , \label{E1_saddle} 
\end{cases}
\end{eqnarray}
then the eigenvalues of $J(E_1^p)$ have real part with opposite signs and $E_1^p$ is a saddle. 
Therefore, from \eqref{E0_saddle} and \eqref{E1_sink}, we conclude that:\\
\begin{enumerate}
\item  if \,$p_T (\mathcal{R}_0)< p < p_{SN}(\mathcal{R}_0)$ and \,$1 < \mathcal{R}_0 < 2$, then $E_0^p$ is a saddle and $E_1^p$ is a sink; \\
\item if \,$p_T(\mathcal{R}_0) < p < p_{SN}(\mathcal{R}_0)$, then $E_0^p$ is a source and $E_1^p$ is a saddle, for \,$\mathcal{R}_0 > 2$, under conditions \eqref{E0_source_E1_saddle} and \eqref{E1_saddle}. \\
\end{enumerate}
\item
From \eqref{E0_saddle} and \eqref{E1_saddle}, we conclude that if \,$p < p_T(\mathcal{R}_0)$, then $E_0^p$ and $E_1^p$ are saddles.
\end{enumerate}
\end{proof} 

 \subsection{Transcritical bifurcation}
\label{s:transcritical}
\begin{lemma}\label{E2_EXIS}
The endemic equilibrium $E_2^p$ undergoes a transcritical bifurcation at $p = p_T(\mathcal{R}_0)$ and lies in the interior of the first quadrant if \,$p <p_T(\mathcal{R}_0)$.  
\end{lemma}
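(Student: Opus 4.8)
The plan is to treat the two assertions separately: first the positivity claim (that $E_2^p$ lies in the open first quadrant precisely when $p<p_T(\mathcal{R}_0)$), which is a direct algebraic inequality, and then the transcritical nature of the bifurcation at $p=p_T(\mathcal{R}_0)$, which requires identifying an equilibrium collision and a nondegeneracy condition. Since $S_2^p=(\sigma+g)/\beta>0$ always, the only binding constraint for $E_2^p$ to sit in the interior of the first quadrant is $I_2^p>0$.

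For the positivity step I would recall from \eqref{R0} that $\sigma+g=A\beta/\mathcal{R}_0$ and rewrite the numerator of $I_2^p$ as $-pm\beta^2+A(\sigma+g)\beta-(\sigma+g)^2=(\sigma+g)\big(A\beta-(\sigma+g)\big)-pm\beta^2$. Dividing by the positive quantity $\beta^2(\sigma+g)$, the condition $I_2^p>0$ is equivalent to $p<\frac{(\sigma+g)(A\beta-(\sigma+g))}{m\beta^2}$, and substituting $\sigma+g=A\beta/\mathcal{R}_0$ collapses the right-hand side exactly to $\frac{A^2}{m}\frac{\mathcal{R}_0-1}{\mathcal{R}_0^2}=p_T(\mathcal{R}_0)$. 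This yields the equivalence $I_2^p>0\Leftrightarrow p<p_T(\mathcal{R}_0)$ and, by continuity, $I_2^p=0$ at $p=p_T(\mathcal{R}_0)$.

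For the bifurcation I would first locate the collision. At $p=p_T(\mathcal{R}_0)$ one computes $A^2-4p_Tm=A^2(\mathcal{R}_0-2)^2/\mathcal{R}_0^2$, so $\sqrt{A^2-4p_Tm}=A|\mathcal{R}_0-2|/\mathcal{R}_0$; substituting into $S_0^p$ and $S_1^p$ shows that $S_2^p=A/\mathcal{R}_0$ coincides with $E_0^p$ when $\mathcal{R}_0>2$ and with $E_1^p$ when $1<\mathcal{R}_0<2$. Thus at $p=p_T(\mathcal{R}_0)$ the endemic branch meets a disease-free equilibrium at $(A/\mathcal{R}_0,0)$. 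To confirm transcriticality I would exploit that the line $\{I=0\}$ is flow-invariant for \eqref{with_vaccine}: the disease-free equilibria always persist on this invariant axis, and the transverse eigenvalue $\lambda_\perp=\beta S^p-(\sigma+g)$ of the relevant disease-free point vanishes exactly at $p=p_T(\mathcal{R}_0)$ and, since $\tfrac{d}{dp}S^p\neq0$ there, crosses zero transversally. An equilibrium branch (the endemic one) threading a point where a transverse eigenvalue crosses zero along a fixed invariant subspace is the defining signature of a transcritical bifurcation.

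For full rigour I would invoke Sotomayor's theorem at the collision point, where the Jacobian $\left(\begin{smallmatrix} A-2S^* & -(\sigma+g)\\ 0 & 0\end{smallmatrix}\right)$ with $S^*=(\sigma+g)/\beta$ has a simple zero eigenvalue whenever $\mathcal{R}_0\neq2$, with right null vector $v=(\sigma+g,\,A-2S^*)$ and left null vector $w=(0,1)$; a short computation gives the nondegeneracy quantity $w^\top D^2 f(v,v)=2\beta(\sigma+g)(A-2S^*)\neq0$, which excludes a saddle-node and pins down the transcritical normal form. The main obstacle I anticipate is precisely this last step: because the parameter $p$ enters \eqref{with_vaccine} only through the constant term $-pm$, the naive derivative $D_xf_p$ vanishes and the standard Sotomayor transversality $w^\top(D_xf_p\,v)\neq0$ is unavailable. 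The way around it is to read transversality off the invariant-axis structure (equivalently, to cross the curve $p=p_T(\mathcal{R}_0)$ along a transversal direction such as varying $\beta$), which restores $w^\top(D_xf_\beta\,v)=S^*(A-2S^*)\neq0$ for $\mathcal{R}_0\neq2$; the excluded value $\mathcal{R}_0=2$ is exactly the double-zero point treated separately in Theorem \ref{th: mainA}.
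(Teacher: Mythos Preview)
Your positivity argument is essentially identical to the paper's: both isolate the sign of the numerator of $I_2^p$, use $\sigma+g=A\beta/\mathcal{R}_0$, and reduce the inequality to $p<p_T(\mathcal{R}_0)$. For the transcritical claim, however, you do considerably more than the paper. The paper simply observes that $I_2^p$ changes sign at $p=p_T(\mathcal{R}_0)$ and then \emph{asserts} the transcritical bifurcation and the stability exchange with a disease-free equilibrium; no collision is located and no nondegeneracy condition is checked. You, by contrast, explicitly compute $A^2-4p_Tm=A^2(\mathcal{R}_0-2)^2/\mathcal{R}_0^2$ to identify which disease-free equilibrium is hit (your identification $E_1^p$ for $1<\mathcal{R}_0<2$ and $E_0^p$ for $\mathcal{R}_0>2$ is correct; the paper's last sentence in its proof contains a typo here), verify that the transverse eigenvalue $\beta S^p-(\sigma+g)$ crosses zero with nonzero speed since $dS^p/dp\neq0$, and check Sotomayor's quadratic condition $w^\top D^2f(v,v)=2\beta(\sigma+g)(A-2S^*)\neq0$ for $\mathcal{R}_0\neq2$.

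Your diagnosis of the Sotomayor transversality obstruction is also correct and worth keeping: because $p$ enters only through the constant $-pm$, one has $D_xf_p\equiv0$ and the textbook condition $w^\top(D_xf_p\,v)\neq0$ is vacuously violated. Your first remedy---reading transversality off the invariant axis $\{I=0\}$ via $\tfrac{d}{dp}\big(\beta S^p-(\sigma+g)\big)=\beta\,dS^p/dp\neq0$---is already a complete argument, since it is exactly what Sotomayor's condition becomes after translating the moving disease-free equilibrium to the origin. Your second remedy (crossing the bifurcation curve in the $\beta$-direction, where $w^\top(D_xf_\beta\,v)=S^*(A-2S^*)\neq0$) is equally valid and perhaps more in keeping with the paper's two-parameter viewpoint. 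Either way, your proof is sound and strictly more rigorous than the paper's on this point; the exclusion $\mathcal{R}_0=2$ that falls out of your computations is exactly the DZ point handled in Theorem~\ref{th: mainA}.
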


\begin{proof}
The equilibrium $E_2^p$ lies in the interior of the first quadrant if both $S_2^p$ and $I_2^p$ are positive. It is clear that $S_2^p =\dfrac{\sigma + g}{\beta}> 0$. Since $\beta^2(\sigma + g) > 0$, we have

$$I_2^p > 0 \quad \Leftrightarrow \quad -pm\beta^2 + A\left(\sigma+g\right)\beta-\left(\sigma+g\right)^2 > 0.$$

Solving the  previous inequality in order to $p$, one gets:

\begin{eqnarray}
\nonumber &&-pm\beta^2 + A\left(\sigma+g\right)\beta-\left(\sigma+g\right)^2 > 0 \\
\nonumber && \\
\nonumber \Leftrightarrow&& p < \dfrac{A\left(\sigma+g\right)\beta-\left(\sigma+g\right)^2}{m\beta^2} \\
\nonumber && \\
\nonumber \Leftrightarrow&& p < \dfrac{A\left(\sigma+g\right)^2\beta}{m\left(\sigma+g\right)\beta^2}-\dfrac{\left(\sigma+g\right)^2A^2}{A^2m\beta^2} \\
\nonumber && \\
\nonumber \Leftrightarrow&& p < \dfrac{\left(\sigma+g\right)^2 A^2}{m\beta^2A^2}\mathcal{R}_0 - \dfrac{1}{\mathcal{R}_0^2}\dfrac{A^2}{m} \\
\nonumber && \\
\nonumber \Leftrightarrow&& p < \dfrac{1}{\mathcal{R}_0} \dfrac{A^2}{m} - \dfrac{1}{\mathcal{R}_0^2}\dfrac{A^2}{m} \\
\nonumber && \\
\nonumber \Leftrightarrow&& p < \dfrac{A^2}{m} \dfrac{\left(\mathcal{R}_0 - 1\right)}{\mathcal{R}_0^2} \\
\nonumber && \\
\nonumber \overset{\eqref{pes_eq}}{\Leftrightarrow}&& p < p_T(\mathcal{R}_0)
\end{eqnarray}
since $\mathcal{R}_0 > 1$. Hence, $E_2^p$ undergoes a \emph{trancritical bifurcation} along the line  $p = p_T(\mathcal{R}_0)$ and  interchanges its stability with $E_2^p$ (if $\mathcal{R}_0<2$) and $E_1^p$ (if $\mathcal{R}_0>2$).
\end{proof}

\subsection{Belyakov transition}
\label{s:Belyakov}
We settle the following functions that will be used throughout this text:

\begin{eqnarray}
\nonumber \Delta_2(p) &\coloneqq& m^2 p^2 \beta^4 + 4pm\left(\sigma+g\right)^2\beta^3 - \left[4A\left(\sigma+g\right)+2pm\right]\left(\sigma+g\right)^2\beta^2 \\
\nonumber &&+ 4\left(\sigma+g\right)^4\beta+\left(\sigma+g\right)^4 \\
\nonumber && \\
\nonumber &=& m^2 \beta^4 p^2 + 2m\beta^2\left( 2\beta-1 \right) \left( \sigma + g \right)^2 p + \left(4\beta+1\right)\left(\sigma+g\right)^4 \\
&&- 4A\left(\sigma+g\right)^3\beta^2 , \label{delta2} \\
\nonumber && \\
p_{B_t}^{(1)}\left(\mathcal{R}_0 \right) &\coloneqq& \dfrac{\left(-2\beta+1-2\displaystyle\sqrt{\beta\left(\mathcal{R}_0 + \beta - 2\right)}\right)A^2}{m\mathcal{R}_0^2} \label{pB1} , \\
\nonumber && \\
p_{B_t}^{(2)}\left(\mathcal{R}_0 \right) &\coloneqq& \dfrac{\left(-2\beta+1+2\displaystyle\sqrt{\beta\left(\mathcal{R}_0 + \beta - 2\right)}\right)A^2}{m\mathcal{R}_0^2} \label{pB2} ,
\end{eqnarray}

\medskip

\noindent for \,$\beta\left(\mathcal{R}_0 + \beta - 2 \right) > 0 \Leftrightarrow \beta > 2 - \mathcal{R}_0$, where \,$p_{B_t}^{(1)}(\mathcal{R}_0)$ and \,$p_{B_t}^{(2)}(\mathcal{R}_0)$ are the square roots of \,$\Delta_2(p)$ written as function of $\mathcal{R}_0$. It is easy to verify that \,$p_{B_t}^{(1)} (\mathcal{R}_0)< p_{B_t}^{(2)}(\mathcal{R}_0)$. 

\begin{lemma} \label{PB2_proof}
If \,$\beta \geq \frac{1}{2}$ and \,$\mathcal{R}_0 > 1 + \frac{1}{4\beta}$, then \,$p_{B_t}^{(2)} (\mathcal{R}_0)> 0$.
\end{lemma}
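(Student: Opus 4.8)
The plan is to reduce the positivity of $p_{B_t}^{(2)}(\mathcal{R}_0)$ to a sign condition on the bracketed factor in its numerator. Since $A^2>0$, $m>0$ and $\mathcal{R}_0^2>0$, the denominator $m\mathcal{R}_0^2$ and the factor $A^2$ in \eqref{pB2} are strictly positive, so the sign of $p_{B_t}^{(2)}(\mathcal{R}_0)$ coincides with the sign of
$$
N := -2\beta + 1 + 2\sqrt{\beta\left(\mathcal{R}_0 + \beta - 2\right)}.
$$
Thus it suffices to prove $N>0$ under the hypotheses $\beta \geq \tfrac12$ and $\mathcal{R}_0 > 1 + \tfrac{1}{4\beta}$.

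Before squaring anything I would first confirm that the radicand is positive, so that $N$ is well defined with a genuine positive square root. From $\mathcal{R}_0 > 1 + \tfrac{1}{4\beta}$ one gets $\mathcal{R}_0 + \beta - 2 > \beta + \tfrac{1}{4\beta} - 1$, and since $4\beta^2 - 4\beta + 1 = (2\beta-1)^2 \geq 0$ yields $\beta + \tfrac{1}{4\beta} \geq 1$, I conclude $\mathcal{R}_0 + \beta - 2 > 0$. Together with $\beta>0$ (condition {\bf (C1)}) this gives $\beta(\mathcal{R}_0+\beta-2)>0$; note that only $\beta>0$ is needed here, not $\beta \geq \tfrac12$.

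Next I would write $N>0$ as $2\sqrt{\beta(\mathcal{R}_0+\beta-2)} > 2\beta - 1$. Here the hypothesis $\beta \geq \tfrac12$ ensures the right-hand side $2\beta-1 \geq 0$, while the left-hand side is nonnegative, so this inequality is equivalent to the one obtained after squaring both sides:
$$
4\beta\left(\mathcal{R}_0 + \beta - 2\right) > \left(2\beta-1\right)^2 .
$$
Expanding gives $4\beta\mathcal{R}_0 + 4\beta^2 - 8\beta > 4\beta^2 - 4\beta + 1$, which simplifies to $4\beta(\mathcal{R}_0 - 1) > 1$, i.e. $\mathcal{R}_0 > 1 + \tfrac{1}{4\beta}$. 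As this is precisely the standing hypothesis, the chain of equivalences closes, so $N>0$ and hence $p_{B_t}^{(2)}(\mathcal{R}_0)>0$.

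The computation is entirely elementary; the only delicate point, and the step I would be most careful about, is the squaring, which is a valid \emph{equivalence} (rather than a one-way implication) precisely because $\beta \geq \tfrac12$ forces $2\beta-1\geq 0$, making both sides nonnegative. This is exactly where the hypothesis $\beta \geq \tfrac12$ is used, and it is what lets the argument recover the threshold $\mathcal{R}_0 = 1 + \tfrac{1}{4\beta}$ as the sharp boundary for the sign of $p_{B_t}^{(2)}$.
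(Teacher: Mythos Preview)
Your proof is correct and follows essentially the same route as the paper: reduce to the sign of the numerator, rewrite $N>0$ as $\sqrt{\beta(\mathcal{R}_0+\beta-2)}>\beta-\tfrac12$, and square using $\beta\geq\tfrac12$ to obtain $\mathcal{R}_0>1+\tfrac{1}{4\beta}$. The only cosmetic difference is that the paper carries out a full case split on the sign of $\beta-\tfrac12$ (thereby also recording the condition $\mathcal{R}_0\geq 2-\beta$ in the case $\beta<\tfrac12$), whereas you restrict from the outset to the hypothesis $\beta\geq\tfrac12$ and add an explicit check that the radicand is positive; both are fine.
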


\begin{proof}
From \eqref{pB2} we know that if

\begin{eqnarray*}
\nonumber && \dfrac{\left(-2\beta+1+2\displaystyle\sqrt{\beta\left(\mathcal{R}_0 + \beta - 2\right)}\right)A^2}{m\mathcal{R}_0^2}  > 0 \\
\nonumber && \\
\nonumber &\Leftrightarrow& \left(-2\beta+1+2\displaystyle\sqrt{\beta\left(\mathcal{R}_0 + \beta - 2\right)}\right)A^2 > 0 \\
\nonumber && \\
\nonumber &\Leftrightarrow& 2\displaystyle\sqrt{\beta\left(\mathcal{R}_0 + \beta - 2\right)} > 2\beta - 1\\
\nonumber && \\
\nonumber &\Leftrightarrow& \displaystyle\sqrt{\beta\left(\mathcal{R}_0 + \beta - 2\right)} > \beta - \dfrac{1}{2} \\
\nonumber && \\
\nonumber &\Leftrightarrow &
\begin{cases} & \beta - \dfrac{1}{2} \geq 0 \quad , \qquad \beta\left(\mathcal{R}_0 + \beta - 2 \right) > \left[\beta-\dfrac{1}{2}\right]^2 \\
\\
& \beta - \dfrac{1}{2} < 0 \quad , \qquad \beta\left(\mathcal{R}_0 + \beta - 2 \right) \geq 0
\end{cases} \\
\nonumber &&\\
\nonumber &\Leftrightarrow &
\begin{cases} & \beta \geq \dfrac{1}{2} \quad , \qquad  \beta \mathcal{R}_0 - 2\beta > -\beta + \dfrac{1}{4} \\
\\
&\beta < \dfrac{1}{2} \quad , \qquad \beta^2 + \mathcal{R}_0 \beta - 2\beta \geq 0
\end{cases} \\
\nonumber &&\\
\nonumber &\Leftrightarrow &
\begin{cases} & \beta \geq \dfrac{1}{2} \quad , \qquad  \beta \left(\mathcal{R}_0 - 1 \right) > \dfrac{1}{4} \\
\\
&\beta < \dfrac{1}{2} \quad , \qquad \beta \geq 2 - \mathcal{R}_0
\end{cases} \\
\nonumber &&\\
&\Leftrightarrow &
\begin{cases} & \beta \geq \dfrac{1}{2} \quad , \qquad  \mathcal{R}_0 > 1+\dfrac{1}{4\beta} \\
\\
&\beta < \dfrac{1}{2} \quad , \qquad \mathcal{R}_0 \geq 2 - \beta
\end{cases} ,
\end{eqnarray*}
then $p_{B_t}^{(2)} (\mathcal{R}_0)> 0$. If \,$\mathcal{R}_0 = 1+\frac{1}{4\beta}$, then \,$p_{B_t}^{(2)}(\mathcal{R}_0) = 0$ and if \,$\mathcal{R}_0 = 2$, then \,$p_{B_t}^{(2)}(\mathcal{R}_0) = p_{SN}(\mathcal{R}_0)$. Hence we conclude that if \,$\beta \geq \frac{1}{2}$ and $\mathcal{R}_0 > 1 + \frac{1}{4\beta}$, then \,$p_{B_t}^{(2)}(\mathcal{R}_0) > 0$. 
\end{proof}

Before we analyze the stability of $E_2^p$, we remind the readers that $E_2^p$ exists in the first quadrant when $p < p_T(\mathcal{R}_0)$ (by Lemma \ref{E2_EXIS}).

\begin{lemma}\label{lemma_grande}
The following statements hold for model \eqref{with_vaccine}:

\bigskip

\begin{enumerate}
\item If \,$p_{B_t}^{(2)}(\mathcal{R}_0)<p<p_T(\mathcal{R}_0)$, then \,$\Delta_2(p) > 0$ and $E_2^p$ is:  \label{ponto1}

\bigskip

\begin{enumerate}
\item stable node for \,$ \mathcal{R}_0<2$;
\item unstable node for \,$p > p_H(\mathcal{R}_0)$.
\end{enumerate}

\bigskip
\smallskip

\item If \,$p<p_{B_t}^{(2)}(\mathcal{R}_0)$, then \,$\Delta_2(p) < 0$ and \,$E_2^p$ is: \label{ponto2}

\bigskip

\begin{enumerate}
\item stable focus for \,$p < p_H(\mathcal{R}_0)$;
\item unstable focus for \,$p > p_H(\mathcal{R}_0)$.
\end{enumerate}
\end{enumerate}

\medskip

Hence, $E_2^p$ undergoes a Belyakov transition at \,$p=p_{B_t}^{(2)}(\mathcal{R}_0)$.
\end{lemma}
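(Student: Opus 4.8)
The plan is to read the type and stability of $E_2^p$ off the trace and determinant of the Jacobian \eqref{JE2_16}, and then to identify the sign of the characteristic discriminant with that of the quadratic $\Delta_2(p)$ from \eqref{delta2}. First I would record that the trace of $J(E_2^p)$ is $\tau(p) \coloneqq \frac{pm\beta^2-(\sigma+g)^2}{\beta(\sigma+g)}$ and its determinant is $\delta(p)\coloneqq\frac{-pm\beta^2+A(\sigma+g)\beta-(\sigma+g)^2}{\beta}=\beta(\sigma+g)\,I_2^p$. By Lemma \ref{E2_EXIS}, whenever $E_2^p$ sits in the open first quadrant, i.e. $p<p_T(\mathcal{R}_0)$, one has $I_2^p>0$ and hence $\delta(p)>0$; the product of the eigenvalues is then positive, so stability is decided solely by the sign of $\tau(p)$.

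I would then convert the threshold $\tau(p)=0$ into the coordinates $(\mathcal{R}_0,p)$. Using $\mathcal{R}_0=\frac{A\beta}{\sigma+g}$ from \eqref{R0} gives $\frac{(\sigma+g)^2}{m\beta^2}=\frac{A^2}{m\mathcal{R}_0^2}=p_H(\mathcal{R}_0)$, so that $\tau(p)<0\iff p<p_H(\mathcal{R}_0)$ and $\tau(p)>0\iff p>p_H(\mathcal{R}_0)$. Thus $E_2^p$ is asymptotically stable for $p<p_H$ and unstable for $p>p_H$, independently of its node/focus character.

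The crux is the node/focus dichotomy, which rests on the algebraic identity
$$\beta^2(\sigma+g)^2\bigl(\tau(p)^2-4\delta(p)\bigr)=\Delta_2(p),$$
obtained by expanding both sides. Since $\beta^2(\sigma+g)^2>0$, the discriminant $\tau(p)^2-4\delta(p)$ shares the sign of $\Delta_2(p)$: real eigenvalues (a node) when $\Delta_2(p)>0$, a complex pair (a focus) when $\Delta_2(p)<0$. Viewing $\Delta_2$ as a quadratic in $p$ with positive leading coefficient $m^2\beta^4$ and roots $p_{B_t}^{(1)}<p_{B_t}^{(2)}$ from \eqref{pB1}--\eqref{pB2}, it is positive outside $[p_{B_t}^{(1)},p_{B_t}^{(2)}]$ and negative inside. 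In the regime of Lemma \ref{PB2_proof} (e.g. $\beta\geq\frac12$) the numerator of \eqref{pB1} is negative, so $p_{B_t}^{(1)}<0$; hence every admissible $p\geq0$ exceeds $p_{B_t}^{(1)}$ and we get $\Delta_2(p)>0\iff p>p_{B_t}^{(2)}(\mathcal{R}_0)$ and $\Delta_2(p)<0\iff p<p_{B_t}^{(2)}(\mathcal{R}_0)$.

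Combining the three ingredients yields the four cases at once. In item (1), $p>p_{B_t}^{(2)}$ forces real eigenvalues; for $1<\mathcal{R}_0<2$ one has $p<p_T(\mathcal{R}_0)<p_{SN}(\mathcal{R}_0)<p_H(\mathcal{R}_0)$ (by Lemma \ref{pes} together with $p_H>p_{SN}$ for $\mathcal{R}_0<2$), so $\tau<0$ and the node is stable, whereas $p>p_H$ gives $\tau>0$ and an unstable node. In item (2), $p<p_{B_t}^{(2)}$ forces a complex pair, stable when $p<p_H$ and unstable when $p>p_H$. Finally, as $p$ crosses $p_{B_t}^{(2)}(\mathcal{R}_0)$ the sign of $\Delta_2$ flips, so the eigenvalues pass from real to non-real --- exactly a Belyakov transition. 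The main obstacle I anticipate is purely computational: verifying the discriminant identity cleanly and confirming that the root of $\Delta_2$ written in $(\mathcal{R}_0,p)$ is indeed $p_{B_t}^{(2)}$, while keeping every case inside the existence window $p<p_T(\mathcal{R}_0)$ supplied by Lemma \ref{E2_EXIS}.
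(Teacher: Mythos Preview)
Your approach is correct and essentially the same as the paper's: the paper writes the eigenvalues of $J(E_2^p)$ explicitly as $\lambda_{1,2}=\dfrac{pm\beta^2-(\sigma+g)^2\pm\sqrt{\Delta_2(p)}}{2\beta(\sigma+g)}$ and then splits cases on the signs of the numerator's real part and of $\Delta_2$, which is exactly your trace/determinant analysis after clearing the factor $\beta(\sigma+g)$ (your identity $\beta^2(\sigma+g)^2(\tau^2-4\delta)=\Delta_2(p)$ is precisely what makes the two presentations coincide). Your extra remark that $p_{B_t}^{(1)}<0$ under the hypotheses of Lemma~\ref{PB2_proof}, and your explicit bridge from ``$\mathcal{R}_0<2$'' to ``$p<p_T<p_H$'' in item~(1a), make transparent two points the paper uses implicitly.
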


\medskip

\begin{proof}

\begin{enumerate}

With respect to \eqref{with_vaccine}, the eigenvalues of the jacobian matrix of $J(E_2^p)$ are:
$$\lambda_1 = \dfrac{p m \beta^2 - \left(\sigma + g \right)^2 - \displaystyle\sqrt{\Delta_2(p)}}{2\beta \left(\sigma + g\right)} \qquad \text{and} \qquad \lambda_2 = \dfrac{p m \beta^2 - \left(\sigma + g \right)^2 + \displaystyle\sqrt{\Delta_2(p)}}{2\beta \left(\sigma + g\right)}.$$
where $\lambda_1 < \lambda_2$. Hence, if $\lambda_1>0$, then $\lambda_2>0$. If $\lambda_2<0$, then $\lambda_1<0$.
We know from \eqref{delta2}, \eqref{pB1}, \eqref{pB2} and Lemma \ref{PB2_proof} that:

\begin{eqnarray*} \label{DELTAS00}
\begin{cases}\,\, \Delta_2 > 0 \,\,, \qquad \text{if} \quad \,0<p<p_{B_t}^{(1)}$ or \,$p_{B_t}^{(2)}<p<+\infty  \\
\\
\,\, \Delta_2 < 0 \,\,, \qquad \text{if} \quad \,p_{B_t}^{(1)}<p<p_{B_t}^{(2)}
\end{cases},
\end{eqnarray*}
 where $p_{B_t}^{(2)} (\mathcal{R}_0)< p_T(\mathcal{R}_0)$.

\bigskip

(\ref{ponto1}) Therefore, if

\begin{eqnarray*}
\nonumber && \lambda_1 > 0 \\
\nonumber && \\
\nonumber &\Leftrightarrow & \dfrac{p m \beta^2 - \left(\sigma + g \right)^2 - \displaystyle\sqrt{\Delta_2(p)}}{2\beta \left(\sigma + g\right)} > 0 \\
\nonumber && \\
\nonumber &\Leftrightarrow & \displaystyle\sqrt{\Delta_2(p)} < p m \beta^2 - \left(\sigma + g \right)^2 \\
\nonumber && \\
\nonumber &\Leftrightarrow &
\begin{cases} &\Delta_2(p) \geq 0 \\
\\
&\Delta_2(p) < \left[p m \beta^2 - \left(\sigma + g \right)^2 \right]^2
\end{cases} \qquad, \quad p m \beta^2 - \left(\sigma + g \right)^2 > 0  \\
\nonumber &&\\
\nonumber &\Leftrightarrow& 
\begin{cases} &p \geq p_{B_t}^{(2)} (\mathcal{R}_0)\\
\\
& p < p_T(\mathcal{R}_0)
\end{cases} \qquad, \quad p > p_H (\mathcal{R}_0) \\
\nonumber &&\\
&\Rightarrow& p_{B_t}^{(2)}(\mathcal{R}_0) < p < p_T(\mathcal{R}_0) \quad, \qquad \text{for} \quad p > p_H (\mathcal{R}_0)
\label{lambdaS_000}
\end{eqnarray*}
then $\lambda_1 > 0 \Rightarrow \lambda_2 > 0$ and $E_2^p$ is an unstable node. Analogously, we proceed in the same way for $\lambda_2 < 0$. If

\begin{eqnarray*}
\nonumber && \lambda_2 < 0 \\
\nonumber && \\
\nonumber &\Leftrightarrow & \dfrac{p m \beta^2 - \left(\sigma + g \right)^2 + \displaystyle\sqrt{\Delta_2(p)}}{2\beta \left(\sigma + g\right)} < 0 \\
\nonumber && \\
\nonumber &\Leftrightarrow & \displaystyle\sqrt{\Delta_2(p)} <  \left(\sigma + g \right)^2 - p m \beta^2 \\
\nonumber && \\
\nonumber &\Leftrightarrow &
\begin{cases} &\Delta_2(p) \geq 0 \\
\\
&\Delta_2(p) < \left[\left(\sigma + g \right)^2 - p m \beta^2 \right]^2
\end{cases} \qquad, \quad \left(\sigma + g \right)^2 - p m \beta^2 > 0  \\
\nonumber &&\\
\nonumber &\Leftrightarrow& 
\begin{cases} &p \geq p_{B_t}^{(2)} (\mathcal{R}_0)\\
\\
& p < p_T(\mathcal{R}_0)
\end{cases} \qquad, \quad p < p_H(\mathcal{R}_0)  \\
\nonumber &&\\
&\Rightarrow& p_{B_t}^{(2)} (\mathcal{R}_0)< p<p_T (\mathcal{R}_0) \quad, \qquad \text{for} \quad p < p_H (\mathcal{R}_0)
\label{lambdaS_000_0}
\end{eqnarray*}

\medskip

then $\lambda_2 < 0 \Rightarrow \lambda_1 < 0$ and $E_2^p$ is a stable node.

\bigskip

(\ref{ponto2}) Now, let us assume that \,$\Delta_2(p) < 0$, which means that \,$p<p_{B_t}^{(2)}(\mathcal{R}_0)$, the eigenvalues are complex and \,$E_2^p$ is a focus. If the real part of the eigenvalues of \,$J(E_2^p)$ is negative (and \,$\beta$ is positive), \emph{i.e.} if

\begin{eqnarray}
\nonumber && pm\beta^2 - \left(\sigma+g\right)^2 < 0 \\
\nonumber && \\
\nonumber \Leftrightarrow && pm\beta^2 < \left(\sigma+g\right)^2 \\
\nonumber && \\
\nonumber \Leftrightarrow&& p < \dfrac{\left(\sigma+g\right)^2}{m\beta^2}  \\
\nonumber && \\
\nonumber \Leftrightarrow&& 
p < \dfrac{A^2}{m\mathcal{R}_0^2} \\
\nonumber && \\
\nonumber \overset{\eqref{pes_eq}}{\Leftrightarrow}&& p < p_H(\mathcal{R}_0),
\end{eqnarray}
then \,$E_2^p$ is a stable focus. Otherwise \,$E_2^p$ is an unstable focus. Also, we conclude that \,$E_2^p$ undergoes a \emph{Belyakov transition} at \,$p=p_{B_t}^{(2)}(\mathcal{R}_0)$, since $E_2^p$ changes its stability from a node to a focus.
\end{enumerate}
\end{proof}

\subsection{Hopf bifurcation}
\label{s:Hopf}

In this section we will find a line in the parameter space $(p, \mathcal{R}_0) $ where   a \emph{Hopf bifurcation} occurs.
\begin{lemma} \label{HOPF_lemma}
If \,$p = p_H(\mathcal{R}_0)$, 
then $E_2^p$ undergoes a subcritical Hopf bifurcation associated to an unstable periodic solution $\mathcal{C}$ (as $\mathcal{R}_0$ increases).
\end{lemma}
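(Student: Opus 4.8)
The plan is to invoke the planar \emph{Hopf bifurcation theorem} (see, \emph{e.g.},~\cite{Kuznetsov2004}), which requires three ingredients: purely imaginary eigenvalues at the critical parameter, a transversality (eigenvalue-crossing) condition, and a nonvanishing first Lyapunov coefficient whose sign fixes the criticality. First I would locate the critical value. The trace of $J(E_2^p)$ in~\eqref{JE2_16} equals $\frac{pm\beta^2-(\sigma+g)^2}{\beta(\sigma+g)}$, which vanishes precisely when $pm\beta^2=(\sigma+g)^2$; using $\mathcal{R}_0 = A\beta/(\sigma+g)$ this is exactly $p=p_H(\mathcal{R}_0)$ from~\eqref{pes_eq}. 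Since for $\mathcal{R}_0>2$ one has $p_H(\mathcal{R}_0)<p_T(\mathcal{R}_0)$ by Lemma~\ref{pes}, Lemma~\ref{E2_EXIS} guarantees $E_2^p$ lies in the interior of the first quadrant, so $I_2^p>0$ and $\det J(E_2^p)=(\sigma+g)\,\beta I_2^p>0$. Hence at $p=p_H$ the eigenvalues are $\pm i\omega$ with $\omega=\sqrt{(\sigma+g)\beta I_2^p}>0$, as needed for a center at the linear level.

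The transversality condition is the next step and is straightforward: since the real part of the eigenvalues is $\tfrac12\operatorname{tr}J(E_2^p)$, differentiating with respect to the bifurcation parameter gives a strictly positive crossing speed (for instance $\frac{d}{dp}\operatorname{Re}\lambda=\frac{m\beta}{2(\sigma+g)}>0$, and, along $p=p_H$, $\frac{d}{d\mathcal{R}_0}\operatorname{Re}\lambda=\frac{A}{\mathcal{R}_0^2}>0$). Thus the pair of eigenvalues crosses the imaginary axis transversally as $\mathcal{R}_0$ increases, so $E_2^p$ passes from a stable to an unstable focus and a Hopf bifurcation does occur at $p=p_H(\mathcal{R}_0)$.

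The decisive and most delicate step is computing the first Lyapunov coefficient to decide the criticality. I would translate the equilibrium to the origin via $u=S-S_2^p$, $v=I-I_2^p$; the system~\eqref{with_vaccine} becomes linear-plus-quadratic, with nonlinear part $-u^2-\beta uv$ in the $\dot u$-equation and $+\beta uv$ in the $\dot v$-equation (no cubic or higher terms). Applying the linear normalization $u=x$, $v=\frac{\omega}{\sigma+g}\,y$ puts the linear part in the canonical form $\dot x=-\omega y+f(x,y)$, $\dot y=\omega x+g(x,y)$, after which I would apply the standard planar formula for the first Lyapunov coefficient. Because every cubic term is absent, only the second-order contributions survive and the expression collapses to $a=\frac{\beta}{8(\sigma+g)}>0$. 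Since $a>0$ together with the positive crossing speed above, the bifurcation is \emph{subcritical}, producing a repelling periodic orbit $\mathcal{C}$ encircling $E_2^p$ on the side of $p=p_H$ where $E_2^p$ is a stable focus. The main obstacle is purely computational—carrying out the normalizing change of coordinates correctly and tracking the signs in the Lyapunov-coefficient formula—but the quadratic structure of~\eqref{with_vaccine} makes the final sign determination clean and unambiguous.
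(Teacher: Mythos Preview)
Your proof is correct and follows the same skeleton as the paper's argument for the first two steps: both verify that $\operatorname{tr}J(E_2^p)=0$ exactly when $p=p_H(\mathcal{R}_0)$ (yielding purely imaginary eigenvalues) and both check the transversal crossing of the imaginary axis. The paper, citing Theorem~3.4.2 of Guckenheimer--Holmes, stops after these two checks and simply asserts subcriticality, pointing to a phase-portrait figure rather than computing anything further. You go beyond this by actually carrying out the Lyapunov-coefficient calculation: after the translation and the rescaling $v=\tfrac{\omega}{\sigma+g}\,y$, the quadratic nonlinearities give $a=\dfrac{\beta}{8(\sigma+g)}>0$ via the standard planar formula, which genuinely establishes the subcritical nature of the bifurcation. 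So your argument is not a different route but a strict strengthening---it supplies the nondegeneracy step the paper leaves implicit.
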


\begin{figure}[h!]
\vspace{2.5cm}
\center
\includegraphics[scale=0.43]{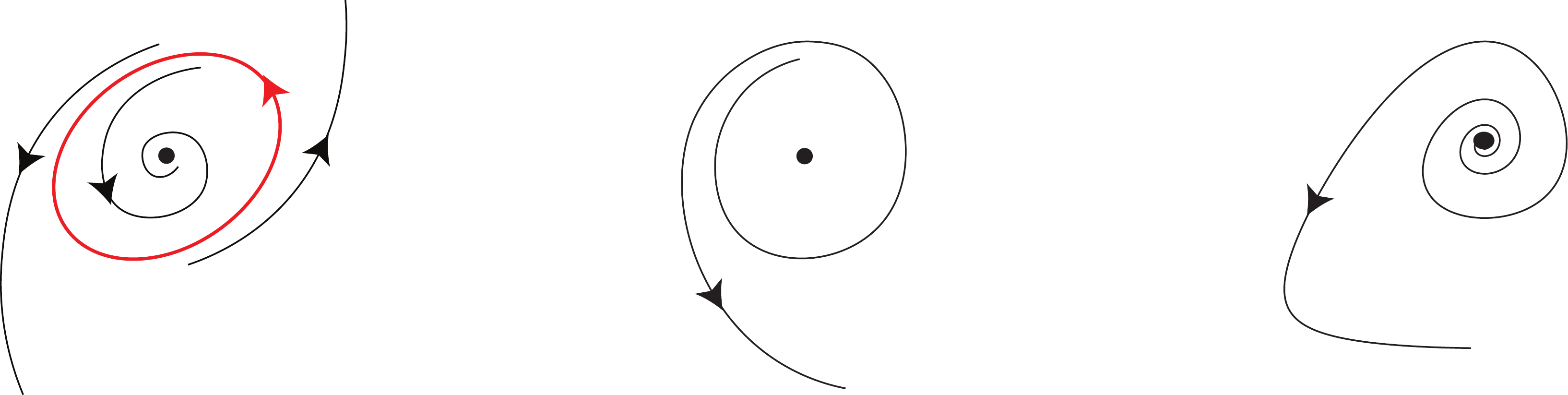}
\caption{Phase portraits of system \eqref{with_vaccine} when $E_2^p$ undergoes a \emph{subcritical Hopf bifurcation as $\mathcal{R}_0$ increases.}}
\label{HOPF_bifurcation}
\end{figure}

\begin{proof}
Let the eigenvalues of $J(E_2^p)$ given by 

$$\lambda_1 = \dfrac{p m \beta^2 - \left(\sigma + g \right)^2 - \displaystyle\sqrt{\Delta_2(p)}}{2\beta \left(\sigma + g\right)} \qquad \text{and} \qquad \lambda_2 = \dfrac{p m \beta^2 - \left(\sigma + g \right)^2 + \displaystyle\sqrt{\Delta_2(p)}}{2\beta \left(\sigma + g\right)}.$$

From~\cite[pp.~151--153, Theorem 3.4.2 (adapted)]{GuckenheimerHolmes1983} we know that the \emph{Hopf bifurcation} occurs when the following conditions hold:

\medskip

\begin{enumerate}
\item the eigenvalues of the map $Df (E_2^p)$ (see \eqref{JE2_16}) have the form $\pm i\omega$ ($\omega>0$), which is equivalent to $\mathrm{Tr} \,J(E_2^p) = 0$, where $\mathrm{Tr}$ represents the usual trace operator. Indeed,

\begin{eqnarray} \label{HF}
\nonumber && \mathrm{Tr} \,J(E_2^p) = 0 \\
\nonumber && \\
\nonumber \Leftrightarrow&& \dfrac{pm\beta^2-\left(\sigma+g\right)^2}{\beta\left(\sigma+g\right)} = 0\\
\nonumber && \\
\nonumber \Leftrightarrow&& pm\beta^2-\left(\sigma+g\right)^2 = 0 \\
\nonumber && \\
\nonumber \Leftrightarrow && p = \dfrac{\left(\sigma+g\right)^2}{m\beta^2} \\
\nonumber \\
\nonumber \Leftrightarrow&& p = \dfrac{A^2}{m\mathcal{R}_0^2} \\
\nonumber \\
\nonumber \overset{\eqref{pes_eq}}{\Leftrightarrow}&& p = p_H (\mathcal{R}_0).
\end{eqnarray}

\bigskip
\bigskip

\item $\dfrac{\mathrm{d}}{\mathrm{d}\mathcal{R}_0} \left(\operatorname{Re} \lambda_j(\mathcal{R}_0)\right) \big|_{\mathcal{R}_0 \in p_H} \neq 0$, \quad for $j \in \{1,2\}$:

\bigskip

Hence, for $p=p_H(\mathcal{R}_0)$ we get

\begin{eqnarray} \label{realpartalpha}
\nonumber \dfrac{\mathrm{d}}{\mathrm{d}\mathcal{R}_0} \left(\operatorname{Re} \lambda_j(\mathcal{R}_0)\right) \big|_{\mathcal{R}_0 = \mathcal{R}_0^{\star}} &=& \dfrac{\mathrm{d}}{\mathrm{d}\mathcal{R}_0} \left( \dfrac{pm\beta^2}{2\beta\left(\sigma+g\right)} - \dfrac{\left(\sigma+g\right)^2}{2\beta\left(\sigma+g\right)}\right) \Big|_{\mathcal{R}_0 = \mathcal{R}_0^{\star}} \\
\nonumber \\
\nonumber &=& \dfrac{\mathrm{d}}{\mathrm{d}\mathcal{R}_0} \left(\dfrac{\sigma + g}{2\beta} - \dfrac{A}{2\mathcal{R}_0}\right)  \Big|_{\mathcal{R}_0 = \mathcal{R}_0^{\star}} \\
\nonumber \\
\nonumber &=& \dfrac{A}{2{\mathcal{R}_0^{\star}}^2} \neq 0.\\
 \end{eqnarray}
\end{enumerate}

\medskip

Hence, $E_2^p$ undergoes a \emph{subcritical Hopf bifurcation} (as $\mathcal{R}_0$ increases; the periodic solution appears for $\mathcal{R}_0<\mathcal{R}_0^{\star}$ as   illustrated in Figure \ref{HOPF_bifurcation}. Figure \ref{ZH_esquema} provides a better perspective of all bifurcations under consideration.
\end{proof}

\subsection{Heteroclinic cycle} \label{HetCycleSub}
\label{s:Heteroclinic}
Finding explicitly a non-robust \emph{heteroclinic cycle} (in the phase space) to two hyperbolic saddles and its bifurcating curve  is a 
difficult task. 
In this section, we use polynomial interpolation (of rational degree) to find the latter curve. We proceed to explain the method that we have used to compute the map. All steps are described in Appendix \ref{apend1}.

Using \verb|MATLAB_R2018a|, we locate thirteen pairs $({\mathcal{R}_0}_i,p_i)$ in the parameter space $(\mathcal{R}_0,p)$ where one observes a \emph{heteroclinic cycle} associated to the disease-free equilibria $E_0^{p_i}$ and $E_1^{p_i}$ (see Table \ref{het_points}). 
The type of function for the interpolation should be appropriately chosen to fit the curve to our finite set of points~\cite[Part I, Section 1]{Rovenski2010}. 

As suggested by~\cite{Yagasaki2002}, points in the parameter space that correspond to the \emph{heteroclinic cycle} associated to the disease-free equilibria should lie on the graph of: $$y = ax^b + c \quad \text{for}\quad a,b,c \in \mathbb{R}.$$ Using again  \verb|MATLAB_R2018a|, we obtain $a=4.495$, $b=  -2.313$ and $c= - 0.039$, and the graph of $p_{\text{Het.}}\left(\mathcal{R}_0 \right) = \dfrac{4.495}{\mathcal{R}_0^{2.313}} - 0.039$ is plotted in Figure \ref{interp}. 

The {\it heteroclinic bifurcation} organizes the dynamics, stressing a geometric configuration in its unfolding.


\section{Numerics} \label{Num_simul}

In this section we describe the dynamics of the bifurcations of model \eqref{with_vaccine} and we perform numerical simulations associated to parameters in all hyperbolic (open) regions of Figures \ref{ZH_esquema} and \ref{subplots}.

\begin{figure}[h!]
\center
\includegraphics[scale=0.43]{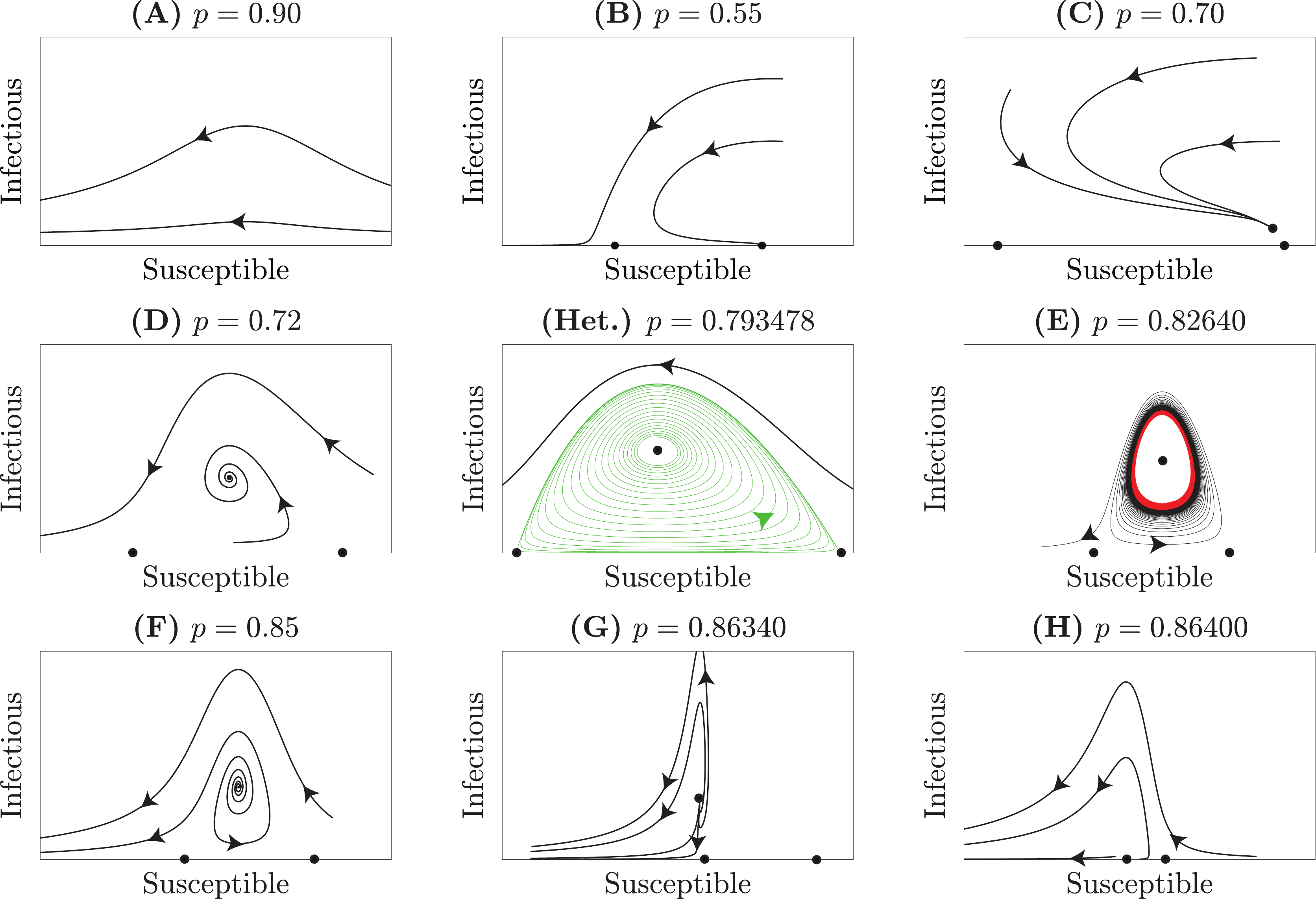}
\caption{Numerical simulations of DZ \emph{bifurcation} diagram with $A = 1.1$, $\beta = 1.3$, $m = \sigma = g = 0.35$ for regions {\bf A}, {\bf D}, {\bf Het.}, {\bf E}, {\bf F}, {\bf G} and {\bf H}. For region {\bf B} we use $A = 1$, $\beta = 1.3$, $m = 0.35$ and $\sigma = g = 0.50$, and for region {\bf C} we use $A = 1.1$, $\beta = 0.91$ and $m = \sigma = g = 0.35$. For all regions we consider $p \in  [0, 1]$. Compare with the theoretical description in Figure \ref{ZH_esquema}. }
\label{subplots}
\end{figure}

\begin{enumerate}
\item[] {\bf A:}   The vaccination coverage is so large that   \emph{Infectious} individuals tend to disappear.  \\
 
\item[] {\bf B:} The flow exhibits two disease-free equilibria, a saddle $E_0^p$ and a sink $E_1^p$.  All trajectories starting in the first quadrant  evolve in  such a way that their $I$ component tend to disappear.  No endemic equilibria exist.\\
 
\medskip
\item[] {\bf C and D:} The endemic equilibrium $E_2^p$ lies in the interior of the first quadrant and it is Lyapunov stable. There is a positive Lebesgue measure set of initial conditions converging to  $E_2^p$, so the disease remains persistently in the population. Initial conditions lying ``below''  $W^s(E_0^p)$ have  $E_2^p$ as $\omega$-limit. \\

\item[] {\bf E:} The unstable \emph{heteroclinic cycle} $\mathcal{H}$ is broken giving rise to an unstable periodic solution $\mathcal{C}$. The region bounded by $\mathcal{C}$ is conducive to the persistence of the disease since all initial conditions tend toward $E_2^p$.
All initial conditions lying in the unbounded region defined by  $\mathcal{C}$ are propitious to the disappearance of the disease.  The period of $\mathcal{C}$   is very large when $(\mathcal{R}_0, p)$ is close to the curve {\bf Het}. 
\medskip
\item[] {\bf F and G:}  The equilibrium 
  $E_2^p$ is   unstable  and all initial conditions are in a region conducive to the disappearance of the disease.
 \medskip
\item[] {\bf H:}  No endemic equilibria exist. The equilibria  $E_0^p$ and $E_1^p$ repel Lebesgue almost all trajectories towards the origin. 
 \end{enumerate}

\section{Concluding remarks} \label{DandC}

In this article, we have performed a bifurcation analysis of a modified SIR model accomodating constant vaccination and logistic growth in the \emph{Susceptible} population, which may be seen as a contribution towards the study of strategies to control infectious diseases.

\subsection{Conclusions}

Model \eqref{modeloSIR} exhibits an endemic \emph{Double-zero} (DZ) \emph{singularity}. To prove the main result,  we have checked that the vector field associated to \eqref{modeloSIR} has a double zero  eigenvalue  (at the endemic equilibrium $E_2^p$) and we have described all associated bifurcation curves passing through the bifurcation point $\left(\mathcal{R}_0^{\star},p^\star\right) = \left(2,\frac{A^2}{4m}\right)$ in the parameter space. In Figure \ref{ZH_esquema}, we have exhibited explicitly  the regions in the space of parameters where the disease persists in the population.   

We have established a threshold for the disease to be extinct or endemic and we have analyzed the existence and asymptotic stability of equilibria.
 Writing the bifurcation curves as a function   $\mathcal{R}_0$ and   $p$ is useful since these variables are key parameters in many epidemiological studies.

The inclusion of a constant vaccination  and a logistic function in  the \emph{Susceptible} population increases the complexity of the dynamics of the classical SIR model. The system may exhibit two disease-free equilibria and none of them corresponds to the equilibrium $(0,0)$. Similar dynamics occurs  in the context of  prey-predator models~\cite{Voorn2007,Olivares2011}. 

Although $Df_{ \left(2,   \frac{A^2}{4m}\right)}$ at $E_2^p$ has a double zero eigenvalue, the associated bifurcation curves  do not correspond to those of the classic \emph{Bogdanov-Takens bifurcation}. It is a variant of the truncated amplitude system associated to the \emph{Hopf-zero} normal form (8.81) of~\cite{Kuznetsov2004}. This implies chaos and suspended horseshoes in the presence of seasonal parameters near $p_H(\mathcal{R}_0)$, as a consequence of the theory developed in~\cite{CarvalhoRodrigues2022, RodriguesBykov2022}.

In the presence of the endemic equilibrium (in the first quadrant of $(S,I)$), we have detected two different regimes for $\mathcal{R}_0 \geq 1$:
\begin{itemize}
\item $1<\mathcal{R}_0  <2$: the increase of the vaccination rate plays an essencial role to decrease the number of {\it Infectious};
\item $2\leq \mathcal{R}_0 $: although vaccination policies are beneficial, the control of the endemic disease is mostly due to the saturation of the class of {\it Susceptible} individuals. 
\end{itemize}


With constant vaccination, the {\it transcritical} and the {\it Hopf bifurcation} curves induce a change in the stability of the endemic equilibrium point. At the epidemiological level, this means that these curves determine if the disease either disappears or remains. {\it The saddle-node bifurcation} curves indicate the moment where disease-free equilibria appear.

Although the model under analysis has limitations in terms of biologic validity, the existence of a DZ {\it bifurcation} organizes the dynamics and stresses the role played by the vaccination; the model agrees well with the empirical beliefs.

\subsection{Future work}
  As already referred, our work generalizes that of Shulgin \emph{et al.}~\cite{Shulgin1998} (with constant vaccination) who modelled individual growth just with birth rate. 
  
    The strategy of constant vaccination  may not be the most effective if the proportion of successfully vaccinated newborns is low. It may be necessary to adopt more effective strategies such as the {\it pulse vaccination}. This technique  aims to find an optimal period between ``shots'' of the vaccine, allowing a proportion of infected individuals lower than a given threshold. It  should be possible to apply this technique to the regions where the disease persists 
({\bf C},  {\bf D} and {\bf E} of Figure \ref{ZH_esquema}).  We would like to derive the period of the  vaccination ``shot'' that (efficiently)  allows to control the number of infected individuals.

{\it Pulse vaccination} can have different effects on the dynamics of infectious diseases, depending on our starting model. Our next goal is to modify  the first component of \eqref{modeloSIR}  in the following way: 

$$ \dot{S} =  S(A-S) - \beta IS - p \displaystyle \sum_{n=0}^{+\infty} S(nT^-) \delta(t-nT)$$
where $n\in \NN$:
\begin{itemize}
\item  $T$ is the period between two consecutive ``shots'' (of vaccination);
\item  $\dpt S(nT^-)=\lim_{t\rightarrow n T^-} S(t)$ is the number of {\it Susceptible} individuals at the instant immediately before being vaccinated and  $\delta $ is the usual Dirac function.\\
\end{itemize}
The analysis of an impulsive differential equation involving {\it pulse vaccination} is an ongoing work.  
\bigskip
\bigskip

\paragraph{\Large{{\bf Declarations}}} \textcolor{white}{.}

\medskip

\noindent {\bf Conflict of interest.} The authors declare that they have no conflict of interest.

\medskip

\noindent Jo\~ao Carvalho and Alexandre Rodrigues have  equally contributed to this work.

\bigskip
 
\appendix
\par
\section{MATLAB R2018a: code and procedure}

We set the code  of \verb|MATLAB_R2018a|   to obtain the interpolation function that best fits the $({\mathcal{R}_0}_i, p_i)$ points in the parameter space $(\mathcal{R}_0, p)$.
\label{apend1}
\medskip

\verb|f=fit(x,y,`power2')|

\verb|plot(f, x, y)|

\verb|xlim([2 3.8])|

\verb|ylim([0.1 0.75])|

\verb|ax = gca;|

\verb|ax.FontSize = 30;|

\verb|xlabel(`$\mathcal{R}_0$',`Interpreter',`latex')|

\verb|ylabel(`$p$',`Interpreter',`latex')|

\verb|leg1 = legend(`(${\mathcal{R}_0}_i,p_i$) points',`Het. cycle function')|

\verb|lgd.FontSize = 20;|

\verb|legend boxoff|

\verb|set(leg1,`Interpreter',`latex');|

\smallskip

\noindent where \verb|x| and \verb|y| are the vectors of ${\mathcal{R}_0}_i$ and $p_i$ values, respectively. The interpolation function given by the \verb|MATLAB_R2018a|   has a correlation coefficient equal to $1$ (precision: $10^{-5}$; confidence bound: 95\%), as we can confirm in Figure \ref{fit_R}.

\begin{table}[h!]
\centering
\begin{tabular}{ |ccc|  }
 \hline
$i$ & ${\mathcal{R}_0}_i$ & $p_i$  \\  [0.29ex]
 \hline  
1 & \verb|2.0725| & \verb|0.793486|  \\ 
2 & \verb|2.2000| & \verb|0.686625|  \\ 
3 & \verb|2.2698| & \verb|0.636156|  \\ 
4 & \verb|2.4237| & \verb|0.541135|  \\ 
5 & \verb|2.6000| & \verb|0.453994|  \\ 
6 & \verb|2.6981| & \verb|0.413374|  \\ 
7 & \verb|2.8039| & \verb|0.374719|  \\ 
8 & \verb|2.9184| & \verb|0.338027|  \\ 
9 & \verb|3.0426| & \verb|0.303294|  \\ 
10 & \verb|3.1778| & \verb|0.270517|  \\ 
11 & \verb|3.3256| & \verb|0.239692|  \\ 
12 & \verb|3.4878| & \verb|0.210816|  \\ 
13 & \verb|3.6667| & \verb|0.183883|  \\
 \hline
\end{tabular}
\medskip
\cprotect \caption{13 interpolating points of the \emph{heteroclinic cycle} function in the space of parameters $(\mathcal{R}_0,p)$. These points were obtained in \verb|MATLAB_R2018a|.}
\label{het_points}
\end{table}

\begin{figure}[h!]
\center
\includegraphics[scale=0.42]{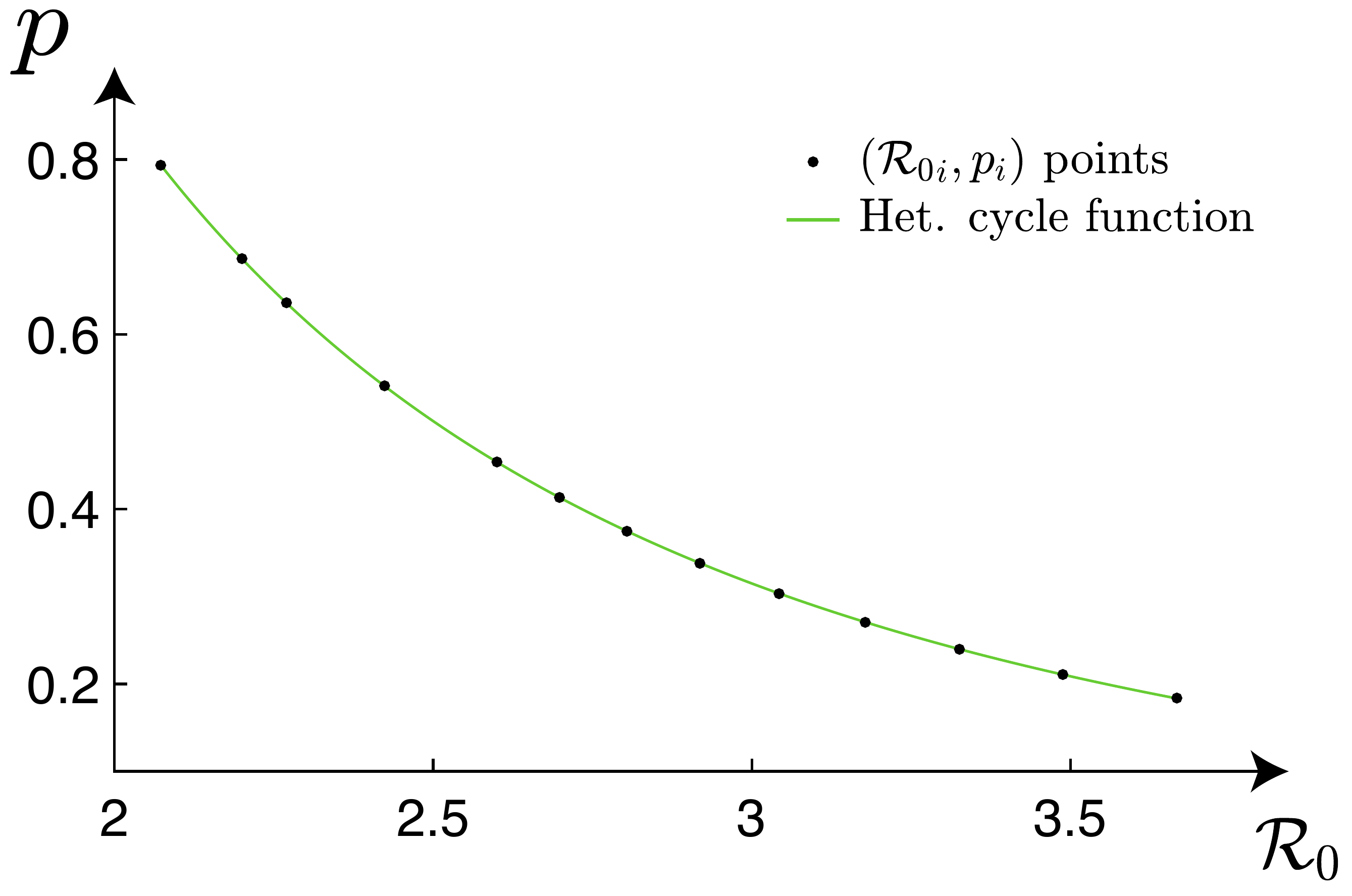}
\caption{Function obtained by interpolation of 13 points (see Table \ref{het_points}) for which it is possible to observe the \emph{heteroclinic cycle} in phase space $(S,I)$. The function obtained by interpolation for which we can find the \emph{heteroclinic cycle} in the space of parameters $(\mathcal{R}_0,p)$ is $p\left(\mathcal{R}_0 \right) \approx 4.495 \mathcal{R}_0^{-2.313} - 0.039$.}
\label{interp}
\end{figure}

\begin{figure}[h!]
\center
\includegraphics[scale=0.75]{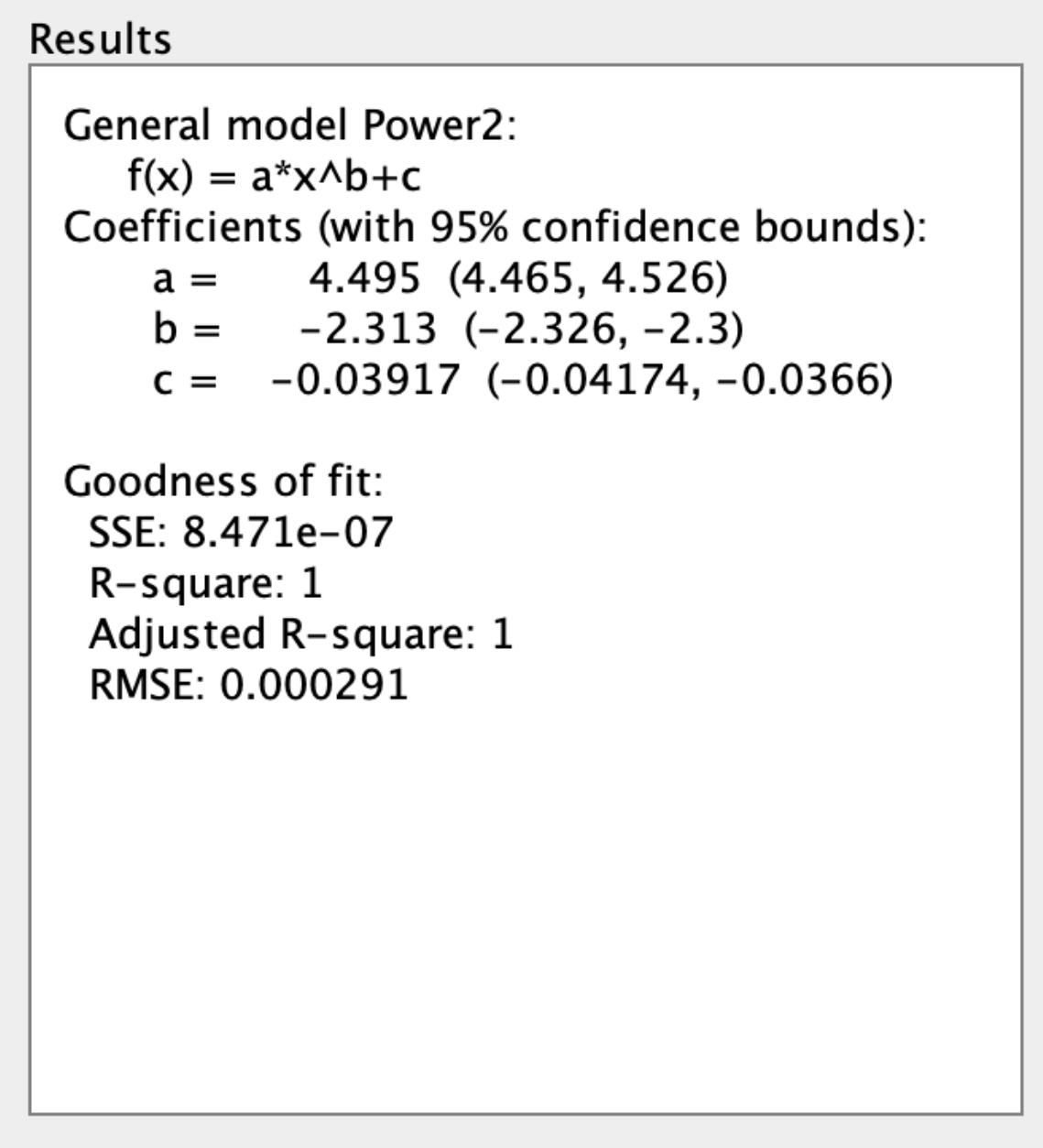}
\cprotect \caption{\verb|MATLAB_R2018a| output for the two-term power series model used in $({\mathcal{R}_0}_i,p_i)$ points interpolation.}
\label{fit_R}
\end{figure}
\setlength{\tabcolsep}{12pt}
\renewcommand{\arraystretch}{1.35}

\newpage 

\end{document}